\newtheorem{thm}{Theorem}[section]
\newtheorem{lem}[thm]{Lemma}
\newtheorem{prop}[thm]{Proposition}
\theoremstyle{definition}
\theoremstyle{remark}
\theoremstyle{remark}
\numberwithin{equation}{section}
\newcommand{\bdy}{\partial\mathbf{\Omega}}
\newcommand{\BR}{\mathbf{B}_R}
\newcommand{\BRc}{\mathbf{B}^c_R}
\newcommand{\Jc}{\mathcal{J}_\lambda}
\newcommand{\Jcll}{\mathcal{J}_{\lambda(l)}}
\newcommand{\E}{\mathscr{E}}
\newcommand{\Ms}{\mathscr{M}}
\newcommand{\Om}{\mathbf{\Omega}}
\newcommand{\Ov}{\Om_\varepsilon}
\newcommand{\RN}{\mathbf{R}^N}
\newcommand{\R}{\mathbf{R}}
\newcommand{\La}{-\Delta\hspace{0.2mm}}
\newcommand{\Lp}{-\Delta_p\hspace{0.2mm}}
\newcommand{\bre}[1]{\left\{#1\right\}}
\newcommand{\pr}[1]{\left(#1\right)}
\newcommand{\n}[1]{\left\vert#1\right\vert}
\newcommand{\nm}[1]{\left\Vert#1\right\Vert}
\newcommand{\dx}{\hspace{0.2mm}dx}
\newcommand{\gradu}{\nabla u}
\newcommand{\ul}{u_\lambda}
\newcommand{\gradul}{\nabla\ul}
\newcommand{\ull}{u_{\lambda(l)}}
\newcommand{\gradull}{\nabla\ull}
\newcommand{\um}{u_\mu}
\newcommand{\gradum}{\nabla\um}
\newcommand{\gradv}{\nabla v}
\newcommand{\vl}{v_\lambda}
\newcommand{\gradvl}{\nabla\vl}
\newcommand{\gradw}{\nabla\omega}
\newcommand{\IOm}{\int_{\Om}}
\newcommand{\IOv}{\int_{\Ov}}
\newcommand{\IR}{\int_{\RN}}
\newcommand{\IBR}{\int_{\mathbf{B}_R}}
\newcommand{\IBRc}{\int_{\mathbf{B}^c_R}}
\newcommand{\ult}{\tilde{u}_\lambda}
\newcommand{\ve}{v_\varepsilon}
\newcommand{\vv}{\varphi_\varepsilon}
\newcommand{\CCON}{C^1_c\big(\RN\big)}
\newcommand{\LalocR}{L^\alpha_{loc}\big(\RN\big)}
\newcommand{\LsR}{L^\sigma\big(\RN\big)}
\newcommand{\LlocR}{L^1_{loc}\big(\RN\big)}
\newcommand{\WpR}{W^{1,\hspace{0.2mm}p}\big(\RN\big)}
\newcommand{\WpRnm}{W^{1,\hspace{0.2mm}p}(\RN)}
\newcommand{\WpOvnm}{W^{1,\hspace{0.2mm}p}(\Ov)}
\newcommand{\LqhR}{L^q_h\big(\RN\big)}
\newcommand{\LqhRnm}{L^q_h(\RN)}
\newcommand{\LqhOvnm}{L^q_h(\Ov)}
\newcommand{\LrkR}{L^r_k\big(\RN\big)}
\newcommand{\LrkRnm}{L^r_k(\RN)}
\newcommand{\LrkOvnm}{L^r_k(\Ov)}
\begin{document}

\title[]
{\bf Elliptic variational problems with mixed nonlinearities}

\author[]
{\sf Qi Han}

\address{Department of Science and Mathematics, Texas A\&M University at San Antonio
\vskip 2pt San Antonio, Texas 78224, USA \hspace{14.4mm}{\sf Email: qhan@tamusa.edu}}

\thanks{{\sf 2010 Mathematics Subject Classification.} 35B09, 35J20, 35J92, 46E35.}
\thanks{{\sf Keywords.} Elliptic equations, positive solutions.}


\begin{abstract}
In this paper, we study the existence and multiplicity results of nontrivial positive solutions to a quasilinear elliptic equation in $\RN$, when $N\geq2$, as
\begin{equation}
\Lp u+u^{p-1}=\lambda\hspace{0.2mm}k(x)u^{r-1}-h(x)u^{q-1}.\nonumber
\end{equation}
Here, $h(x),k(x)>0$ are Lebesgue measurable functions, $1<p<q<\infty$, $p<r<\min\{p^*,q\}$ if $p<N$ while $p<r<q$ if $p\geq N$, and $\lambda>0$ is a parameter.
\end{abstract}

\maketitle

\section{Introduction}\label{Int} 
\noindent In 1996, Alama and Tarantello \cite{AT} investigated an elliptic problem, when  $N\geq3$, as
\begin{equation}\label{eq1.1}
\La u=\lambda\hspace{0.2mm}u+k(x)u^{r-1}-h(x)u^{q-1},\hspace{2mm}u>0
\end{equation}
on a bounded, smooth domain $\Om$, subject to $u=0$ on $\bdy$, for $h(x),k(x)>0$, $2<r<q$ and a constant $\lambda\in\R$, where the existence and multiplicity results of solutions to \eqref{eq1.1} are discussed assuming either $\bigintsss_\Om\big[\frac{k^q(x)}{h^r(x)}\big]^{\frac{1}{q-r}}\dx<\infty$ or $\bigintsss_\Om\big[\frac{k^{q-2}(x)}{h^{r-2}(x)}\big]^{\frac{2^*}{(2^*-2)(q-r)}}\dx<\infty$.
Notice that the latter condition is weaker than the former condition when $r<2^*$ and $\IOm k^{\frac{2^*}{2^*-r}}(x)\dx<\infty$.

It seems to the author that Chabrowski \cite{Ch} is the first person who studied a similar problem to \eqref{eq1.1} in $\RN$, with
$\Lp u=-\operatorname{div}\big(\n{\gradu}^{p-2}\gradu\big)$ and $p<r<q<p^*$, as
\begin{equation}\label{eq1.2}
\Lp u+u^{p-1}=\lambda\hspace{0.2mm}u^{r-1}-h(x)u^{q-1},\hspace{2mm}u\geq0,
\end{equation}
where the existence result of solutions to \eqref{eq1.2} is discussed assuming that $\IR h^{\frac{r}{r-q}}(x)\dx<\infty$; some time later, Pucci and R\u{a}dulescu \cite{PR,Ra} extended the above work of Chabrowski, and first studied the existence and multiplicity results of solutions to \eqref{eq1.2} under the same assumption.
Quite recently, Autuori and Pucci \cite{AP1,AP2}, Pucci and Zhang \cite{PZ}, and R\u{a}dulescu \textsl{et al.} \cite{RXZ} further studied an extension of \eqref{eq1.2} in $\RN$, for $p<r<\min\{p^*,q\}$, as
\begin{equation}\label{eq1.3}
\Lp u+w(x)u^{p-1}=\lambda\hspace{0.2mm}k(x)u^{r-1}-h(x)u^{q-1},\hspace{2mm}u\geq0,
\end{equation}
with $w(x)\simeq\frac{1}{(1+\n{x})^p}>0$\footnote{In \cite{RXZ}, the condition $w(x)\geq w_0>0$ is used.} and $p<N$, where the existence and multiplicity results of solutions to \eqref{eq1.3} are shown using either the conditions in \cite{AT} or $\bigintsss_\RN\big[\frac{k^{q-1}(x)}{h^{r-1}(x)}\big]^{\frac{p^*}{(p^*-1)(q-r)}}\dx<\infty$.
Again, the latter is weaker than $\bigintsss_\RN\big[\frac{k^q(x)}{h^r(x)}\big]^{\frac{1}{q-r}}\dx<\infty$ if $r<p^*$ and $\IR k^{\frac{p^*}{p^*-r}}(x)\dx<\infty$.

In this paper, for $N\geq2$, we study the existence of nontrivial solutions to
\begin{equation}\label{eq1.4}
\Lp u+u^{p-1}=\lambda\hspace{0.2mm}k(x)u^{r-1}-h(x)u^{q-1},\hspace{2mm}u\geq0
\end{equation}
in $\RN$, where $1<p<q<\infty$, $p<r<\min\{p^*,q\}$ with $p^*=\frac{Np}{N-p}$ if $p<N$ while $p<r<q$ if $p\geq N$, $h(x),k(x)>0$ are Lebesgue measurable functions, and $\lambda>0$ is a constant.

Our primary assumptions are as follows.

\vskip 2pt
{\bf Standing Assumptions.}
\vskip 2pt
\noindent{\bf(1-a).} When $1<p<N$, $k(x)\in\LalocR$ for some $\alpha\in\big(\frac{p^*}{p^*-r},\infty\big]$;
\vskip 2pt
\noindent{\bf(1-b).} when $N\leq p<\infty$, $k(x)\in\LalocR$ for some $\alpha\in(1,\infty]$.
\vskip 2pt
\noindent{\bf(2-a).} When $p\leq s<r$, $\IR k^{\frac{s+\beta(s-q)}{s-r}}(x)h^{-\beta}(x)\dx<\infty$ for some $\beta\in\big[\frac{r}{q-r},\infty\big)$;
\vskip 2pt
\noindent{\bf(2-b).} when $1<p<N$ and $p<r<s\leq p^*$ or when $N\leq p<\infty$ and $p<r<s<\infty$,
\vskip 0pt\hspace{6.2mm} $\IR k^{\frac{s+\beta(s-q)}{s-r}}(x)h^{-\beta}(x)\dx<\infty$ for some $\beta\in\big[0,\frac{r}{q-r}\big]$.
\vskip 2pt
\noindent{\bf(3-a).} When $p<s<r$, $\IR k^{\frac{s+\beta(s-q)}{s-r}}(x)h^{-\beta}(x)\dx<\infty$ for some $\beta\in\big[\frac{s}{q-s},\frac{s(r-p)}{(s-p)(q-r)}\big)$;
\vskip 2pt
\noindent{\bf(3-b).} when $1<p<N$ and $p<r<s\leq p^*$ or when $N\leq p<\infty$ and $p<r<s<\infty$,
\vskip 0pt\hspace{6.2mm} $\IR k^{\frac{s+\beta(s-q)}{s-r}}(x)h^{-\beta}(x)\dx<\infty$ for some $\beta\in\big(\frac{s(r-p)}{(s-p)(q-r)},\frac{s}{q-s}\big]$.
\vskip 2pt
\noindent{\bf(4-a).} When $1<p<N$, $\bigintsss_\RN\big[\frac{k^{q-t}(x)}{h^{r-t}(x)}\big]^{\frac{\gamma}{q-r}}\dx<\infty$ for some $0\leq t<p$ and
$\gamma\in\big[\frac{p^*}{p^*-t},\frac{p}{p-t}\big]$,
\vskip 0pt\hspace{6.2mm} and $\bigintsss_\RN\big[\frac{k^{q-p}(x)}{h^{r-p}(x)}\big]^{\frac{\gamma}{q-r}}\dx<\infty$ for some $\gamma\in\big[\frac{p^*}{p^*-p},\infty\big]$ with $h(x),k(x)\in\LlocR$;
\vskip 2pt
\noindent{\bf(4-b).} when $N\leq p<\infty$, $\bigintsss_\RN\big[\frac{k^{q-t}(x)}{h^{r-t}(x)}\big]^{\frac{\gamma}{q-r}}\dx<\infty$ for some $0\leq t<p$ and
$\gamma\in\big(1,\frac{p}{p-t}\big]$\footnote{By abuse of notation, we include $t=0$ and $\gamma=1$ here.},
\vskip 0pt\hspace{6.2mm} and $\bigintsss_\RN\big[\frac{k^{q-p}(x)}{h^{r-p}(x)}\big]^{\frac{\gamma}{q-r}}\dx<\infty$ for some $\gamma\in(1,\infty]$ with $h(x),k(x)\in\LlocR$.
\vskip 2pt
\noindent{\bf(5-a).} When $1<p<N$, $k(x)\in\LsR$ for some $\sigma\in\big[\frac{p^*}{p^*-r},\infty\big]$;
\vskip 2pt
\noindent{\bf(5-b).} when $N\leq p<\infty$, $k(x)\in\LsR$ for some $\sigma\in(1,\infty]$.
\vskip 2pt

It is worthwhile to remark our hypotheses have all the known conditions regarding the integrability of the ratio function $k^\mu(x)/h^\nu(x)$ as special cases; for instance, $\beta=\frac{r}{q-r}$ or $t=0$ yields $\bigintsss_\RN\big[\frac{k^q(x)}{h^r(x)}\big]^{\frac{1}{q-r}}\dx<\infty$, $t=1$ and $\gamma=\frac{p^*}{p^*-1}$ for $1<p<N$ implies $\bigintsss_\RN\big[\frac{k^{q-1}(x)}{h^{r-1}(x)}\big]^{\frac{p^*}{(p^*-1)(q-r)}}\dx<\infty$, while $t=p$ and $\gamma=\frac{p^*}{p^*-p}$ for $1<p<N$ leads to $\bigintsss_\RN\big[\frac{k^{q-p}(x)}{h^{r-p}(x)}\big]^{\frac{p^*}{(p^*-p)(q-r)}}\dx<\infty$.

Notice hypotheses {\bf(1)-(2)} ensure the compact embedding $\E\hookrightarrow\LrkR$ (see Propositions \ref{P2.1} and \ref{P2.2} below).
In addition, if $\beta=0$ in {\bf(2-b)}, then one has assumption {\bf(5)} covered expect for $\sigma=\infty$, but condition {\bf(5)} is crucial in proving the existence of a second positive solution to \eqref{eq1.4}; so, it is better to keep it separately.
On the other hand, one can verify that the extremal cases in {\bf(2-b)} where $\beta=0$ and $\beta=\frac{r}{q-r}$ altogether lead to certain cases in assumption {\bf(4)} for appropriate indices as observed earlier; for example, the condition $\bigintsss_\RN\big[\frac{k^{q-t}(x)}{h^{r-t}(x)}\big]^{\frac{p^*}{(p^*-t)(q-r)}}\dx<\infty$ is weaker than $\bigintsss_\RN\big[\frac{k^q(x)}{h^r(x)}\big]^{\frac{1}{q-r}}\dx<\infty$ provided $r<p^*$ and $\IR k^{\frac{p^*}{p^*-r}}(x)\dx<\infty$.

Finally, for $\nm{u}^q_{\LqhRnm}=\IR h\n{u}^q\dx$ (and $\nm{u}^q_{\LrkRnm}=\IR k\n{u}^r\dx$ used later), define
\begin{equation}
\E=\bre{u\in\WpR:\nm{u}_\E:=\nm{u}_{\WpRnm}+\nm{u}_{\LqhRnm}<\infty}.\nonumber
\end{equation}
Then, our main result of this paper reads as below.

\begin{thm}\label{T1.1}
Under our {\bf Standing Assumptions (1)-(3)}, or {\bf(1), (2) and (4)}, there is a $\lambda_1\geq0$ such that equation \eqref{eq1.4} has at least a positive solution in $\E$ for every $\lambda>\lambda_1$.
Besides, when our {\bf Standing Assumption (5)} is also true, then $\lambda_1>0$ and equation \eqref{eq1.4} has at least a positive solution in $\E$ if and only if $\lambda\geq\lambda_1$ (except for $t=p$); moreover, there is a $\lambda_2(\geq\lambda_1)$ such that equation \eqref{eq1.4} has at least two positive solutions in $\E$ for every $\lambda>\lambda_2$.
\end{thm}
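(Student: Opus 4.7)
The plan is to apply the Nehari manifold method combined with the Drabek-Pohozaev fibering technique. I introduce the energy functional on $\E$,
\begin{equation*}
\Jc(u) = \frac{1}{p}\nm{u}^p_{\WpRnm} - \frac{\lambda}{r}\IR k(x)u_+^r\dx + \frac{1}{q}\IR h(x)u_+^q\dx, \qquad u_+ = \max\{u,0\},
\end{equation*}
so that every nontrivial critical point of $\Jc$ yields a positive solution of \eqref{eq1.4} via testing against $u_-$ (nonnegativity), followed by standard $p$-Laplacian regularity and the strong maximum principle. Under hypotheses {\bf(1)-(2)}, the compact embedding $\E\hookrightarrow\LrkR$ from Propositions \ref{P2.1} and \ref{P2.2} below makes $\Jc\in C^1(\E)$ with a weakly sequentially continuous $L^r_k$-term, and the condition $p<r<q$ combined with the ratio integrability in {\bf(3)} or {\bf(4)} yields coercivity and weak sequential lower semicontinuity of $\Jc$ on $\E$.

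Next I analyze the fibering map $\phi_u(t):=\Jc(tu)$ for $u\in\E$ with $\IR ku_+^r\dx>0$. Its derivative satisfies
\begin{equation*}
t^{1-p}\phi_u'(t) = \nm{u}^p_{\WpRnm} - \lambda\nm{u_+}^r_{\LrkRnm}\,t^{r-p} + \nm{u_+}^q_{\LqhRnm}\,t^{q-p},
\end{equation*}
whose unique positive minimum becomes negative once $\lambda$ exceeds an explicit scaling threshold, producing two zeros $0<t_1(u)<t_2(u)$ that split the Nehari manifold into disjoint branches $\mathcal{N}_\lambda^- = \{t_1(u)\,u\}$ (local maxima of $\phi_u$) and $\mathcal{N}_\lambda^+ = \{t_2(u)\,u\}$ (local minima), with $\mathcal{N}_\lambda^0 = \emptyset$. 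Setting
\begin{equation*}
\lambda_1 := \inf\bre{\lambda>0 : \eqref{eq1.4}\ \text{has a positive solution in}\ \E}\ \in\ [0,\infty),
\end{equation*}
I obtain the first positive solution for every $\lambda>\lambda_1$ by minimizing $\Jc$ on $\mathcal{N}_\lambda^+$: coercivity yields a bounded minimizing sequence, the compact embedding into $\LrkR$ together with the weak lower semicontinuity of the $\LqhR$-part and the emptiness of $\mathcal{N}_\lambda^0$ (which kills the Lagrange-multiplier obstruction on the Nehari constraint) produce a positive critical point $\ul\in\mathcal{N}_\lambda^+$.

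Assuming additionally {\bf(5)}, I prove $\lambda_1>0$ by testing \eqref{eq1.4} against any nontrivial $u\geq 0$: the Nehari identity $\nm{u}^p_{\WpRnm}+\nm{u}^q_{\LqhRnm} = \lambda\IR ku^r\dx$ combined with H\"{o}lder ($k\in\LsR$) and the Sobolev embedding $\WpR\hookrightarrow L^{r\sigma/(\sigma-1)}(\RN)$ forces $\nm{u}^{r-p}_{\WpRnm}\geq C^{-1}\lambda^{-1}$, which together with a Young-type upper bound on $\lambda\IR ku^r\dx$ in terms of $\nm{u}_{\WpRnm}^p+\nm{u}_{\LqhRnm}^q$ rules out small $\lambda$. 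Attainment at $\lambda=\lambda_1$ (with the $t=p$ caveat noted in the theorem) follows via a limit argument along $\lambda_n\searrow\lambda_1$, using the ratio-integrability to get $\E$-boundedness of a family of solutions and the same compactness to extract a nontrivial limit. For multiplicity, I minimize $\Jc$ on the other branch $\mathcal{N}_\lambda^-$, obtaining a second positive critical point $\vl$; distinctness from $\ul$ holds for $\lambda$ past a threshold $\lambda_2\geq\lambda_1$ because $\Jc(\vl)>0$ while $\Jc(\ul)<0$ there. The chief obstacle throughout is the loss of compactness inherent to the unbounded domain $\RN$---minimizing sequences may concentrate or leak $\WpRnm$-mass to infinity---which is precisely what the tailored integrability hypotheses {\bf(1)-(4)} are crafted to counteract through the compact embedding $\E\hookrightarrow\LrkR$ (absorbing the $\LrkR$-term) and the weighted control supplied by the $\LqhR$-term.
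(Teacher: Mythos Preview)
Your outline takes a route genuinely different from the paper's. The paper does not use the Nehari manifold at all: for the first assertion it shows $\Jc$ is coercive and weakly lower semicontinuous (Lemma~\ref{L2.4}), observes that $\inf_\E\Jc<0$ once $\lambda>\tilde\lambda$, and then---this is the key step---upgrades existence to the whole interval $(\lambda_1,\infty)$ by a subsolution/constrained-minimization device (Proposition~\ref{P2.5}): given $\lambda>\lambda_1$, pick $\mu\in[\lambda_1,\lambda)$ at which a solution $\um$ already exists, note that $\um$ is a subsolution at $\lambda$, and minimize $\Jc$ over the closed convex set $\{u\in\E:u\ge\um\}$, showing directly that the constrained minimizer is a free critical point. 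For the second solution the paper uses a mountain-pass theorem (Candela--Palmieri), with the geometry coming from estimate~\eqref{eq2.21}. Your fibering picture, with $\Jc(\ul)<0$ on $\mathcal{N}_\lambda^+$ and $\Jc(\vl)>0$ on $\mathcal{N}_\lambda^-$, is the natural Nehari counterpart of this global-min/mountain-pass dichotomy, and in principle either framework can be made to work.

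The genuine gap in your sketch is the assertion $\mathcal{N}_\lambda^0=\emptyset$. It is false here. Combining the Nehari identity with $\phi_u''(1)=0$ yields $(r-p)\nm{u}^p_{\WpRnm}=(q-r)\nm{u^+}^q_{\LqhRnm}$; for each ray $\{su:s>0\}$ with $u^+\not\equiv0$ there is exactly one pair $(s,\lambda)$ placing $su$ in $\mathcal{N}_\lambda^0$, and the resulting threshold $\lambda_*(u)$ is homogeneous of degree zero, continuous in $u$, and unbounded above (take $u$ supported where $k$ is small relative to $h$). Hence $\mathcal{N}_\lambda^0\neq\emptyset$ for every $\lambda$ above $\inf_u\lambda_*(u)$---precisely the regime you need. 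Without $\mathcal{N}_\lambda^0=\emptyset$ you cannot simply say ``the Lagrange multiplier vanishes'' for constrained minimizers on $\mathcal{N}_\lambda^\pm$; you would need a separate argument that the minimizers stay away from $\mathcal{N}_\lambda^0$, and also that the weak limit of a minimizing sequence on $\mathcal{N}_\lambda^-$ is nonzero. The paper's subsolution and mountain-pass arguments are designed to avoid exactly these obstacles.
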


\section{Proof of Theorem \ref{T1.1}}\label{PT1.1} 
\noindent In this section, we seek nontrivial positive solutions to \eqref{eq1.4} in $\E$ through identifying the critical points of the associated energy functional $\Jc:\E\to\R$, defined by
\begin{equation}\label{eq2.1}
\Jc(u)=\frac{1}{p}\IR\pr{\n{\gradu}^p+\n{u}^p}\dx+\frac{1}{q}\IR h\n{u}^q\dx-\frac{\lambda}{r}\IR k\pr{u^+}^r\dx.
\end{equation}

The first task here is to provide some compact embedding results of $\E\hookrightarrow\LrkR$.

\begin{prop}\label{P2.1}
Let $1\leq p<N$ and $p\leq s<r<\min\{p^*,q\}<\infty$, or let $N\leq p<\infty$ and $p\leq s<r<q<\infty$.
Suppose $h(x),k(x)>0$ satisfy $k(x)\in\LalocR$ for some $\alpha\in\big(\frac{p^*}{p^*-r},\infty\big]$ in the former case and $\alpha\in(1,\infty]$ in the latter case, while
$\IR k^{\frac{s+\beta(s-q)}{s-r}}(x)h^{-\beta}(x)\dx<\infty$ for some $\beta\in\big[\frac{r}{q-r},\infty\big)$.
Then, the embedding $\E\hookrightarrow\LrkR$ is compact.
\end{prop}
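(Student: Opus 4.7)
The plan is to split $\IR k|u_n-u|^r\dx$ into a local piece on $\BR$ and a tail on $\BRc$, treat the local piece via local compact Sobolev embedding combined with H\"older's inequality, and treat the tail via a three-factor H\"older inequality that exploits the weighted integrability hypothesis. I take any sequence $(u_n)\subset\E$ with $u_n\rightharpoonup u$ in $\E$, so that $\nm{u_n}_{\WpRnm}$ and $\nm{u_n}_{\LqhRnm}$ are uniformly bounded. By continuous Sobolev embedding, $\nm{u_n}_{L^s(\RN)}$ is then also uniformly bounded for every admissible $s$, using interpolation between $L^p$ and $L^{p^*}$ when $p<N$, and Morrey's embedding when $p\geq N$.

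For the local piece, H\"older gives
\begin{equation}
\IBR k|u_n-u|^r\dx\leq\nm{k}_{L^\alpha(\BR)}\nm{u_n-u}_{L^{r\alpha'}(\BR)}^{r},\nonumber
\end{equation}
with $\alpha'=\alpha/(\alpha-1)$. The hypothesis $\alpha>p^*/(p^*-r)$ (or $\alpha>1$ when $p\geq N$) gives $r\alpha'<p^*$ (or $r\alpha'<\infty$), and Rellich--Kondrachov on $\BR$ forces $\nm{u_n-u}_{L^{r\alpha'}(\BR)}\to 0$, so the local piece vanishes for each fixed $R$.

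For the tail, I would factor $k|u|^r=\br{k\,h^{-1/p_2}}\cdot\br{h^{1/p_2}|u|^{q/p_2}}\cdot|u|^{s/p_3}$ and apply three-factor H\"older with exponents $p_1,p_2,p_3\in(1,\infty]$ satisfying $1/p_1+1/p_2+1/p_3=1$ together with the matching identity $r=q/p_2+s/p_3$. A direct algebraic check shows that for every $\beta\in[r/(q-r),\infty)$ there is a unique admissible triple with $p_1/p_2=\beta$, for which $p_1=(s+\beta(s-q))/(s-r)$; the endpoint $\beta=r/(q-r)$ corresponds to $p_3=\infty$ and reduces to a two-factor H\"older. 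Applying this with the uniform bounds on $\nm{u_n-u}_{\LqhRnm}$ and $\nm{u_n-u}_{L^s(\RN)}$ yields
\begin{equation}
\IBRc k|u_n-u|^r\dx\leq C\pr{\IBRc k^{(s+\beta(s-q))/(s-r)}h^{-\beta}\dx}^{1/p_1},\nonumber
\end{equation}
which tends to $0$ as $R\to\infty$ by dominated convergence, uniformly in $n$.

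A standard $\varepsilon/2$ argument then combines the local and tail estimates to give $u_n\to u$ in $\LrkR$, establishing the claimed compact embedding. The one nontrivial point is the algebraic verification that the H\"older exponents arising from the factorization are precisely compatible with the exponent $(s+\beta(s-q))/(s-r)$ in the integrability hypothesis; once this correspondence is pinned down, the rest of the argument is routine functional analysis.
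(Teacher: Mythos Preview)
Your proof is correct and follows essentially the same route as the paper: the paper uses the identical three-factor H\"older splitting $k|u|^r=\{h|u|^q\}^{\mathfrak{x}}\,|u|^{r-q\mathfrak{x}}\,\{k\,h^{-\mathfrak{x}}\}$ with exponents $r_1,r_2,r_3$ that coincide with your $p_2,p_3,p_1$ (and the same endpoint observation $\beta=r/(q-r)\Leftrightarrow p_3=\infty$), and then handles the local part on $\BR$ via $W^{1,p}(\BR)\hookrightarrow L^{r\alpha'}(\BR)$ and the tail on $\BRc$ via the integrability of $k^{r_3}h^{-\beta}$, exactly as you do. The only cosmetic difference is that the paper normalizes to $u_l\rightharpoonup 0$ rather than working with $u_n-u$.
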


\begin{proof}
Recall $\E$ is a subspace of $\WpR$ by its definition.
Write $\mathfrak{x}=\frac{\beta(s-r)}{s+\beta(s-q)}$, $\mathfrak{y}=\frac{r+\beta(r-q)}{s+\beta(s-q)}$ and $\mathfrak{z}=\frac{s-r}{s+\beta(s-q)}$ for an arbitrarily chosen $s\in[p,r)$, and notice $\mathfrak{x}+\mathfrak{y}+\mathfrak{z}=1$.

Now, set $r_1=\mathfrak{x}^{-1}$, $r_2=\mathfrak{y}^{-1}$ and $r_3=\mathfrak{z}^{-1}$ to observe
\begin{equation}\label{eq2.2}
\begin{split}
&\IR k\n{u}^r\dx=\IR\bre{h\n{u}^q}^{\mathfrak{x}}\n{u}^{r-q\mathfrak{x}}\bre{k\hspace{0.2mm}h^{-\mathfrak{x}}}\dx\\
\leq&\Big(\IR h\n{u}^q\dx\Big)^{\frac{1}{r_1}}\Big(\IR\n{u}^s\dx\Big)^{\frac{1}{r_2}}\Big(\IR k^{r_3}h^{-\beta}\dx\Big)^{\frac{1}{r_3}}\\
\leq&\,C_1\Big(\IR h\n{u}^q\dx\Big)^{\frac{1}{r_1}}\Big(\IR\pr{\n{\gradu}^p+\n{u}^p}\dx\Big)^{\frac{s}{pr_2}}\Big(\IR k^{r_3}h^{-\beta}\dx\Big)^{\frac{1}{r_3}},
\end{split}
\end{equation}
with $s=r_2(r-q\mathfrak{x})$ for a constant $C_1>0$, provided $\mathfrak{x},\mathfrak{y},\mathfrak{z}>0$ simultaneously.

To see $\mathfrak{x}>0$ or $\mathfrak{z}>0$, we have $\beta>\frac{s}{q-s}$ and to see $\mathfrak{y}>0$, we have $\beta>\frac{r}{q-r}$, so that $\beta>\frac{r}{q-r}$ since $s<r$.
We certainly can take $\mathfrak{y}=0$ and consequently have $\beta=\frac{r}{q-r}$.

Next, one has $\frac{q}{r_1}+\frac{s}{r_2}=q\mathfrak{x}+s\mathfrak{y}=r$; so, the embedding $\E\to\LrkR$ is continuous.
Now, let $\bre{u_l:l\geq1}$ be a sequence of functions in $\E$, with $u_l\rightharpoonup0$ when $l\to\infty$ and $\nm{u_l}_\E$ uniformly bounded.
It follows that
\begin{equation}
\IR k\n{u_l}^r\dx=\IBR k\n{u_l}^r\dx+\IBRc k\n{u_l}^r\dx.\nonumber
\end{equation}
Here, and hereafter, $\BR$ denotes the ball of radius $R$ in $\RN$ that is centered at the origin and $\BRc=\RN\setminus\BR$.
For the integral over $\BRc$, we apply \eqref{eq2.2} on $\BRc$ to derive
\begin{equation}
\IBRc k\n{u_l}^r\dx\leq C_1\nm{k^{r_3}h^{-\beta}}^{\frac{1}{r_3}}_{1,\hspace{0.2mm}\BRc}\nm{u_l}^r_\E\to0\nonumber
\end{equation}
as $R\to\infty$.
For the integral over $\BR$, our (local) hypotheses lead to
\begin{equation}
W^{1,\hspace{0.2mm}p}\big(\BR\big)\hookrightarrow L^{\frac{\alpha r}{\alpha-1}}\big(\BR\big)\to L^r_k\big(\BR\big),\nonumber
\end{equation}
because either $r\leq\frac{\alpha r}{\alpha-1}<p^*$ when $1\leq p<N$ or $r\leq\frac{\alpha r}{\alpha-1}<\infty$ when $N\leq p<\infty$.
Hence, for a subsequence relabeled with the same index $l$, one proves $u_l\to0$ in $\LrkR$.
\end{proof}

\begin{prop}\label{P2.2}
Let $1\leq r<\min\{q,s\}<\infty$, with $p\leq s\leq p^*$ if $1\leq p<N$ and $p\leq s<\infty$ if $N\leq p<\infty$.
Suppose further $h(x),k(x)>0$ satisfy $k(x)\in\LalocR$ for some $\alpha\in\big(\frac{p^*}{p^*-r},\infty\big]$ in the former case and $\alpha\in(1,\infty]$ in the latter case, while $\IR k^{\frac{s+\beta(s-q)}{s-r}}(x)h^{-\beta}(x)\dx<\infty$ for some $\beta\in\big[0,\frac{r}{q-r}\big]$.
Then, the embedding $\E\hookrightarrow\LrkR$ is compact.
\end{prop}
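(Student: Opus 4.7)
The plan is to mirror the proof of Proposition \ref{P2.1} as closely as possible, adjusting only for the fact that now $r<s$ rather than $s<r$. I would choose the same three exponents
\begin{equation}
\mathfrak{x}=\frac{\beta(s-r)}{s+\beta(s-q)},\quad\mathfrak{y}=\frac{r+\beta(r-q)}{s+\beta(s-q)},\quad\mathfrak{z}=\frac{s-r}{s+\beta(s-q)},\nonumber
\end{equation}
which still satisfy $\mathfrak{x}+\mathfrak{y}+\mathfrak{z}=1$ and the key identity $q\mathfrak{x}+s\mathfrak{y}=r$. For their non-negativity in the present setting $r<s$, I would first note that $\mathfrak{y}\geq0$ forces $\beta\leq\frac{r}{q-r}$, which is exactly the upper end of the admissible range; the inequality $\frac{r}{q-r}\leq\frac{s}{q-s}$ (a direct consequence of $r<s$) then keeps the common denominator $s+\beta(s-q)$ strictly positive throughout the range $\beta\in\big[0,\frac{r}{q-r}\big]$, so all three exponents are non-negative.

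With these exponents in place, the Hölder factorization used in \eqref{eq2.2} gives
\begin{equation}
\IR k\n{u}^r\dx\leq\Big(\IR h\n{u}^q\dx\Big)^{\mathfrak{x}}\Big(\IR\n{u}^s\dx\Big)^{\mathfrak{y}}\Big(\IR k^{1/\mathfrak{z}}h^{-\beta}\dx\Big)^{\mathfrak{z}},\nonumber
\end{equation}
where the exponent $1/\mathfrak{z}=\frac{s+\beta(s-q)}{s-r}$ matches the integrability hypothesis exactly. The middle factor is now controlled by the Sobolev embedding $\WpR\hookrightarrow L^s(\RN)$, which is valid because $p\leq s\leq p^*$ when $1\leq p<N$ and $p\leq s<\infty$ when $N\leq p<\infty$. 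Together with the fact that $\frac{q}{1/\mathfrak{x}}+\frac{s}{1/\mathfrak{y}}=r$, this yields a continuous embedding $\E\hookrightarrow\LrkR$.

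For the compactness statement, I would repeat the $\RN=\BR\cup\BRc$ splitting from Proposition \ref{P2.1}. Applied to the exterior piece, the displayed estimate combined with $\nm{k^{1/\mathfrak{z}}h^{-\beta}}_{1,\hspace{0.2mm}\BRc}\to0$ as $R\to\infty$ makes $\IBRc k\n{u_l}^r\dx$ uniformly small for weakly convergent sequences $\{u_l\}$ in $\E$. On $\BR$, a Hölder step together with the local integrability $k\in\LalocR$ reduces matters to the classical compact embedding $W^{1,p}\big(\BR\big)\hookrightarrow L^{\alpha r/(\alpha-1)}\big(\BR\big)$, which is available because $\alpha>\frac{p^*}{p^*-r}$ (respectively $\alpha>1$) ensures $\frac{\alpha r}{\alpha-1}<p^*$ (respectively $<\infty$). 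Passing to a subsequence then gives $u_l\to 0$ in $\LrkR$.

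I expect the only real subtlety to be the two degenerate endpoints. At $\beta=0$, the factor $\big(\IR h\n{u}^q\big)^{\mathfrak{x}}$ trivializes and the three-term Hölder collapses to the two-term Hölder requiring $k\in L^{s/(s-r)}(\RN)$; at $\beta=\frac{r}{q-r}$, the factor $\big(\IR\n{u}^s\big)^{\mathfrak{y}}$ disappears instead, and the estimate degenerates to a two-term Hölder that does not involve $s$ at all. Both endpoints can be handled by interpreting the vanishing factor as the constant $1$, but they should be noted explicitly to justify the inclusive range $\beta\in\big[0,\frac{r}{q-r}\big]$ stated in the hypothesis.
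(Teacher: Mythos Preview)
Your proposal is correct and follows essentially the same approach as the paper: both mirror the proof of Proposition~\ref{P2.1}, keep the same exponents $\mathfrak{x},\mathfrak{y},\mathfrak{z}$, and observe that the sign analysis now forces $\beta\leq\frac{r}{q-r}$ (with the degenerate endpoints $\mathfrak{y}=0$ and $\mathfrak{x}=0$ treated separately). The only cosmetic difference is that for the endpoint $\beta=0$ the paper cites external compact-embedding results for $\WpR\hookrightarrow\LrkR$, whereas you handle it inline via the collapsed two-term H\"older inequality; your treatment is slightly more self-contained but otherwise equivalent.
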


\begin{proof}
The proof is almost identical to that of Proposition \ref{P2.1}, and the only difference lies in the verification of $\mathfrak{x},\mathfrak{y},\mathfrak{z}>0$.
To see $\mathfrak{x}>0$ or $\mathfrak{z}>0$, one has $\beta<\frac{s}{q-s}$ and to see $\mathfrak{y}>0$, one has $\beta<\frac{r}{q-r}$, so that $\beta<\frac{r}{q-r}$ since $s>r$.
Again, taking $\mathfrak{y}=0$ yields $\beta=\frac{r}{q-r}$.
Finally, for $\beta=0$, or equivalently for $\mathfrak{x}=0$, we work in the space $\WpR$ and have
\begin{equation}
\E\to\WpR\hookrightarrow\LrkR,\nonumber
\end{equation}
which is simply a special case of Proposition 4.2 in \cite{Ha1} and Theorem 4.6 in \cite{Ha3}.
\end{proof}

Next, we make elementary observations for all solutions to equation $\eqref{eq1.4}$.

\begin{lem}\label{L2.3}
Under the {\bf Standing Assumption (3)}, each solution $\ul\in\E$ to equation $\eqref{eq1.4}$ satisfies
\begin{equation}\label{eq2.3}
\IR\pr{\n{\gradul}^p+\n{\ul}^p}\dx+\IR h\n{\ul}^q\dx\leq\lambda^\rho C_{hk},
\end{equation}
while under the {\bf Standing Assumption (4)}, each solution $\ul\in\E$ to equation $\eqref{eq1.4}$ satisfies
\begin{equation}\label{eq2.4}
\Big(\IR\pr{\n{\gradul}^p+\n{\ul}^p}\dx\Big)^{\frac{p-t}{p}}\leq\lambda^\varrho C'_{hk}.
\end{equation}
Here, $\rho,\varrho>0$ and $C_{hk},C'_{hk}>0$ are absolute constants, independent of $\lambda,u$.
\end{lem}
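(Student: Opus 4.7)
The plan is to test equation \eqref{eq1.4} against $\ul$ itself, which (since $\ul\in\E$) yields the energy identity
\[
\IR\pr{\n{\gradul}^p+\n{\ul}^p}\dx+\IR h\n{\ul}^q\dx=\lambda\IR k(\ul)^r\dx,
\]
and then to bound the right-hand side by two different three-term H\"older decompositions calibrated to hypotheses \textbf{(3)} and \textbf{(4)} respectively. In each case the resulting inequality is self-improving and closes precisely within the prescribed ranges of $\beta$ (resp.\ $t,\gamma$).

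For \eqref{eq2.3} I would reuse the decomposition $k\n{u}^r=(h\n{u}^q)^{\mathfrak{x}}\n{u}^{r-q\mathfrak{x}}(kh^{-\mathfrak{x}})$ already exploited in Propositions \ref{P2.1}--\ref{P2.2}, so that H\"older together with the Sobolev embedding $W^{1,p}(\RN)\hookrightarrow L^s(\RN)$ (with $s=r_2(r-q\mathfrak{x})$ in the admissible Sobolev window) produces
\[
\IR k(\ul)^r\dx\leq C\pr{\IR h\n{\ul}^q\dx}^{\mathfrak{x}}\pr{\IR\pr{\n{\gradul}^p+\n{\ul}^p}\dx}^{s\mathfrak{y}/p}\pr{\IR k^{r_3}h^{-\beta}\dx}^{\mathfrak{z}}.
\]
Writing $A$ for the left-hand side of the energy identity, and using $s\mathfrak{y}=r-q\mathfrak{x}$, the combined exponent $\mathfrak{x}+s\mathfrak{y}/p$ reduces to $(r-(q-p)\mathfrak{x})/p$; a short algebra using the identity $(s-r)(q-p)+(r-p)(q-s)=(q-r)(s-p)$ shows this is \emph{strictly less than $1$} precisely when $\beta$ lies in the window of \textbf{(3-a)} or \textbf{(3-b)}. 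Hence $A\leq\lambda C_1 A^{\theta}$ with $\theta\in(0,1)$, yielding \eqref{eq2.3} with $\rho=1/(1-\theta)$.

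For \eqref{eq2.4} the appropriate split is instead $k\n{u}^r=(h\n{u}^q)^{a}\n{u}^{sb}(kh^{-a})^{c}$ with $a+b+c=1$ and $qa+sb=r$. Matching the third factor against the integrand $[k^{q-t}/h^{r-t}]^{\gamma/(q-r)}$ in \textbf{(4)} forces $a=(r-t)/(q-t)$, from which a direct computation gives $b=t(q-r)/[s(q-t)]$, $c=(q-r)(s-t)/[s(q-t)]$, and the key identification $\gamma=s/(s-t)$; thus the prescribed range $\gamma\in[p^*/(p^*-t),p/(p-t)]$ corresponds exactly to $s\in[p,p^*]$ (and to $s\in[p,\infty)$ when $p\geq N$). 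After H\"older, the Sobolev control of $\IR\n{u}^s$, and Young's inequality splitting $A_2^a A_1^{bs/p}$ into $\epsilon A_2+C_{\epsilon}A_1^{bs/[p(1-a)]}$, the closing exponent $bs/[p(b+c)]$ collapses to $t/p<1$; absorbing $\epsilon A_2$ on the left and dividing by $A_1^{t/p}$ gives $A_1^{(p-t)/p}\leq C\lambda$, i.e., \eqref{eq2.4} with $\varrho=1$.

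The main obstacle is precisely the algebraic bookkeeping: verifying positivity and unit sum of each H\"older triple, that the auxiliary exponent $s$ lands in the admissible Sobolev window, and above all that the closing exponents are strictly less than $1$ exactly within the prescribed windows of $\beta$ in \textbf{(3)} and of $(t,\gamma)$ in \textbf{(4)}. This is what makes those windows sharp, and where the decompositions of Propositions \ref{P2.1}--\ref{P2.2} are reused and extended; the case $t=p$ in \textbf{(4)} is genuinely exceptional, since the exponent $(p-t)/p$ degenerates to zero, and must be excluded from this lemma.
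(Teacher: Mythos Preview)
Your argument for \eqref{eq2.3} is essentially the paper's: the same H\"older/Sobolev split from Proposition~\ref{P2.1} is fed back into the energy identity, and your closing condition $\theta=\mathfrak{x}+s\mathfrak{y}/p<1$ is exactly the paper's $\tfrac{1}{r_4}+\tfrac{1}{r_5}<1$ (the paper applies a three-term Young inequality where you simply bound each factor by $A$; the resulting $\rho=1/(1-\theta)$ coincides).

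For \eqref{eq2.4} your route is genuinely different. The paper moves $\int h|u_\lambda|^q$ to the right of the energy identity and uses the \emph{pointwise} elementary bound $h_1\mathfrak{u}^{\mu}-h_2\mathfrak{u}^{\nu}\le C_{\mu\nu}\bigl[h_1^{\nu}/h_2^{\mu}\bigr]^{1/(\nu-\mu)}$ with $h_1=\lambda k$, $h_2=h$, $\mu=r-t$, $\nu=q-t$, obtaining directly $A_1\le \lambda^{(q-t)/(q-r)}C\int\bigl[k^{q-t}/h^{r-t}\bigr]^{1/(q-r)}|u_\lambda|^{t}\,dx$, after which a single H\"older/Sobolev step closes. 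Your three-term H\"older decomposition also works and has the virtue of unifying both parts of the lemma under one scheme; the pointwise trick, on the other hand, is shorter and makes the exponent $\varrho=(q-t)/(q-r)$ appear immediately. One correction to your version: to absorb $\lambda CK\,\epsilon A_2$ into $A_2$ on the left you must take $\epsilon\sim\lambda^{-1}$, so $C_\epsilon\sim\lambda^{a/(1-a)}$ and the final power is $\varrho=1/(1-a)=(q-t)/(q-r)$, not $\varrho=1$; this is harmless for the lemma, which only asks for some $\varrho>0$. Also, $t=p$ need not be excluded: \eqref{eq2.4} then reads $1\le\lambda^{\varrho}C'_{hk}$, a lower bound on $\lambda$ for nontrivial solutions, and both arguments deliver it (yours via $A_1\le\lambda^{\varrho}C A_1$); this degenerate case is exactly what is invoked later in Lemma~\ref{L2.6}.
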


\begin{proof}
First, one notices each solution $\ul$ to $\eqref{eq1.4}$ (not necessarily positive) satisfies
\begin{equation}\label{eq2.5}
\IR\pr{\n{\gradul}^p+\n{\ul}^p}\dx+\IR h\n{\ul}^q\dx=\lambda\IR k\n{\ul}^r\dx.
\end{equation}

For our assumption {\bf(3)}, denote $r_4=\frac{s+\beta(s-q)}{\beta(s-r)}=r_1$, $r_5=\frac{p}{s}r_2=\frac{p\{s+\beta(s-q)\}}{s\{r+\beta(r-q)\}}<r_2$ (as $p<s$) and $\rho=\frac{r_4r_5}{r_4r_5-r_4-r_5}>r_3>0$ in \eqref{eq2.2} of Proposition \ref{P2.1} to observe, via \eqref{eq2.5},
\begin{equation}\label{eq2.6}
\begin{split}
&\lambda\IR k\n{\ul}^r\dx\leq\Big(\IR\pr{\n{\gradul}^p+\n{\ul}^p}\dx\Big)^{\frac{1}{r_5}}\Big(\IR h\n{\ul}^q\dx\Big)^{\frac{1}{r_4}}\\
&\hspace{29.4mm}\times\bigg\{\lambda^\rho C^\rho_1\Big(\IR k^{r_3}h^{-\beta}\dx\Big)^{\frac{\rho}{r_3}}\bigg\}^{\frac{1}{\rho}}\\
\leq&\,\frac{1}{r_5}\IR\pr{\n{\gradul}^p+\n{\ul}^p}\dx+\frac{1}{r_4}\IR h\n{\ul}^q\dx+\lambda^\rho C_{hk}.
\end{split}
\end{equation}
So, \eqref{eq2.3} is verified by virtue of \eqref{eq2.6} if $\frac{1}{r_4}+\frac{1}{r_5}<1$, which is true provided $\beta\in\big[\frac{s}{q-s},\frac{s(r-p)}{(s-p)(q-r)}\big)$ when $s<r$ or $\beta\in\big(\frac{s(r-p)}{(s-p)(q-r)},\frac{s}{q-s}\big]$ when $r<s$.

For our assumption {\bf(4)}, rewrite \eqref{eq2.5} by switching $\IR h\n{\ul}^q\dx$ to the righthand side, and recall an elementary estimate $h_1\mathfrak{u}^\mu-h_2\mathfrak{u}^\nu\leq C_{\mu\nu}\big[\frac{h_1^\nu}{h_2^\mu}\big]^{\frac{1}{\nu-\mu}}$ for $h_1,h_2>0$, $\nu>\mu>0$, $\mathfrak{u}\geq0$ and $C_{\mu\nu}>0$.
Set $h_1=\lambda k$, $h_2=h$, $\mu=r-t$, $\nu=q-t$ and $\mathfrak{u}=\n{\ul}$ to arrive at
\begin{equation}\label{eq2.7}
\begin{split}
&\IR\pr{\n{\gradul}^p+\n{\ul}^p}\dx\leq\lambda^{\frac{q-t}{q-r}}C(q,r,t)\IR\Big[\frac{k^{q-t}(x)}{h^{r-t}(x)}\Big]^{\frac{1}{q-r}}\n{\ul}^t\dx\\
\leq&\,\lambda^{\frac{q-t}{q-r}}C(q,r,t)\Big(\IR\Big[\frac{k^{q-t}(x)}{h^{r-t}(x)}\Big]^{\frac{\gamma}{q-r}}\dx\Big)^{\frac{1}{\gamma}}
\Big(\IR\n{\ul}^{\frac{\gamma t}{\gamma-1}}\dx\Big)^{\frac{\gamma-1}{\gamma}}\\
\leq&\,\lambda^{\frac{q-t}{q-r}}C'_{hk}\Big(\IR\pr{\n{\gradul}^p+\n{\ul}^p}\dx\Big)^{\frac{t}{p}}
\end{split}
\end{equation}
for $C(q,r,t)>0$; thus, \eqref{eq2.4} is a direct consequence of \eqref{eq2.7} with $\varrho=\frac{q-t}{q-r}>0$.
\end{proof}

\begin{lem}\label{L2.4}
Under the {\bf Standing Assumptions (1)-(3)} or {\bf(1)-(2)\&(4)}, $\Jc$ is of class $C^1$, and is coercive and sequentially weakly lower semicontinuous in $\E$.
\end{lem}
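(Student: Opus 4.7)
My plan is to verify the three claims about $\Jc$ in sequence, each leaning on the compact embedding $\E\hookrightarrow\LrkR$ from Propositions \ref{P2.1}--\ref{P2.2} and the structural inequalities already produced in Lemma \ref{L2.3}. The $C^1$ regularity is the most routine piece: the first two summands in \eqref{eq2.1} are the standard $\frac{1}{p}$- and $\frac{1}{q}$-power functionals on $\WpR$ and $\LqhR$, hence of class $C^1$ on those spaces and therefore on $\E$; for the last summand $u\mapsto\frac{\lambda}{r}\IR k(u^+)^r\dx$, the continuous embedding $\E\hookrightarrow\LrkR$ makes the candidate G\^ateaux derivative $\varphi\mapsto\lambda\IR k(u^+)^{r-1}\varphi\dx$ well-defined, and a standard Nemytskii-continuity argument upgrades it to a Fr\'echet derivative.

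For coercivity under (1)--(3), I will re-examine the H\"older estimate \eqref{eq2.2} of Proposition \ref{P2.1} with the exponents $r_4,\,r_5$ used in Lemma \ref{L2.3}, which produces an inequality of the form $\lambda\IR k|u|^r\dx\leq C\lambda\,\nm{u}^{p/r_5}_{W^{1,p}}\nm{u}^{q/r_4}_{\LqhRnm}$ up to a finite constant. Hypothesis (3) is calibrated precisely so that $\frac{1}{r_4}+\frac{1}{r_5}<1$, and the resulting sub-linearity of the product permits a parametric Young inequality to yield
\[
\lambda\IR k|u|^r\dx\leq\epsilon\nm{u}^p_{W^{1,p}}+\epsilon\nm{u}^q_{\LqhRnm}+C_{\epsilon,\lambda}
\]
for arbitrarily small $\epsilon>0$; inserting into \eqref{eq2.1} and choosing $\epsilon<r/q$ delivers coercivity in the full $\E$-norm.

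For coercivity under (1)--(2) and (4), I will instead use the pointwise inequality $h_1\mathfrak{u}^\mu-h_2\mathfrak{u}^\nu\leq C[h_1^\nu/h_2^\mu]^{1/(\nu-\mu)}$ exactly as in \eqref{eq2.7}, but this time with $h_1=\lambda k$, $h_2=\tau h$ (for some $\tau\in(0,\,1/q)$), $\mu=r-t$ and $\nu=q-t$; multiplying by $|u|^t$ and integrating gives
\[
\frac{\lambda}{r}\IR k|u|^r\dx-\tau\IR h|u|^q\dx\leq C_\tau\lambda^{\frac{q-t}{q-r}}\IR\Big[\frac{k^{q-t}}{h^{r-t}}\Big]^{\frac{1}{q-r}}|u|^t\dx.
\]
H\"older's inequality with exponent $\gamma$ (whose associated weighted integral is finite by (4)) together with the Sobolev embedding $\WpR\hookrightarrow L^{\gamma t/(\gamma-1)}(\RN)$, valid because the admissible range of $\gamma$ in (4-a)/(4-b) forces $\gamma t/(\gamma-1)\in[p,\,p^*]$, then bounds the right-hand side by $C'_\tau\lambda^{(q-t)/(q-r)}\nm{u}^t_{W^{1,p}}$, whence
\[
\Jc(u)\geq\frac{1}{p}\nm{u}^p_{W^{1,p}}+\Big(\frac{1}{q}-\tau\Big)\nm{u}^q_{\LqhRnm}-C'_\tau\lambda^{(q-t)/(q-r)}\nm{u}^t_{W^{1,p}},
\]
which is coercive in $\E$ since $t<p$.

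Finally, sequential weak lower semicontinuity of the $\WpR$- and $\LqhR$-summands follows from convexity plus norm-continuity; for the $\LrkR$-summand, any $u_n\rightharpoonup u$ in $\E$ is bounded, so Propositions \ref{P2.1}--\ref{P2.2} deliver $u_n\to u$ strongly in $\LrkR$, and extracting an a.e.\ convergent subsequence followed by dominated convergence produces $\IR k(u_n^+)^r\dx\to\IR k(u^+)^r\dx$, so that term is in fact weakly continuous. The one genuinely delicate calibration in the proof is the Sobolev alignment $\gamma t/(\gamma-1)\in[p,\,p^*]$ used under hypothesis (4), which is precisely the quantitative content of the bracketing of $\gamma$ in (4-a) and (4-b) and constitutes the main technical obstacle of the lemma.
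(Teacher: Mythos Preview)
Your treatment of $C^1$-regularity, coercivity under hypothesis {\bf(3)}, and sequential weak lower semicontinuity is correct and matches the paper's approach. The gap lies in your coercivity argument under hypothesis {\bf(4)}.

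You write that the lower bound
\[
\Jc(u)\geq\frac{1}{p}\nm{u}^p_{\WpRnm}+\Big(\frac{1}{q}-\tau\Big)\nm{u}^q_{\LqhRnm}-C'_\tau\lambda^{(q-t)/(q-r)}\nm{u}^t_{\WpRnm}
\]
``is coercive in $\E$ since $t<p$.'' But Standing Assumption {\bf(4)} explicitly permits $t=p$: the second clause in {\bf(4-a)} and {\bf(4-b)} is the integrability of $\big[\frac{k^{q-p}}{h^{r-p}}\big]^{\gamma/(q-r)}$ together with $h,k\in\LlocR$, and the paper treats this as a standalone sub-case (note the parenthetical ``except for $t=p$'' in Theorem~\ref{T1.1} and the separate handling in Lemma~\ref{L2.3} and Proposition~\ref{P2.7}). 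When $t=p$, your inequality collapses to
\[
\Jc(u)\geq\Big(\frac{1}{p}-C'_\tau\lambda^{(q-p)/(q-r)}\Big)\nm{u}^p_{\WpRnm}+\Big(\frac{1}{q}-\tau\Big)\nm{u}^q_{\LqhRnm},
\]
and since $C'_\tau$ is built from the \emph{full} integral $\int_{\RN}\big[\frac{k^{q-p}}{h^{r-p}}\big]^{\gamma/(q-r)}\dx$, there is no mechanism to make the coefficient of $\nm{u}^p_{\WpRnm}$ positive for all $\lambda$; coercivity fails.

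The paper closes this gap by a localisation argument that your write-up misses entirely (and which you misidentify when you call the Sobolev alignment ``the main technical obstacle''). For $t=p$ one splits $\RN=\BR\cup\BRc$: on $\BRc$ the tail of $\int\big[\frac{k^{q-p}}{h^{r-p}}\big]^{\gamma/(q-r)}$ is small for $R$ large, so your estimate \emph{does} absorb into $\big(\frac{1}{2p}-\varepsilon\big)\nm{u}^p_{\WpRnm}$. On $\BR$ one further decomposes into the sets $\mathbf{X}=\{k<M,\,h>\delta\}$, $\mathbf{Y}=\{k<M,\,h\leq\delta\}$, $\mathbf{Z}=\{k\geq M\}$; the hypothesis $h,k\in\LlocR$ forces $\mathscr{L}(\mathbf{Y}'\cup\mathbf{Z})\to0$ as $M\to\infty$, $\delta\to0$, so on $\mathbf{Y}\cup\mathbf{Z}$ the weighted integral is again small and absorbable, while on $\mathbf{X}$ the ratio $k^{q-\tilde t}/h^{r-\tilde t}$ is bounded for any $\tilde t<p$, reducing to the sub-critical case already handled. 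This decomposition, and the essential use of $h,k\in\LlocR$, is the missing ingredient in your argument.
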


\begin{proof}
The ideas follow from \cite[Lemma 3.2]{Ha4}; so, we only stress the differences.

For our assumption {\bf(3)}, similar to \eqref{eq2.6}, one has
\begin{equation}
\begin{split}
&\frac{\lambda}{r}\IR k\pr{u^+}^r\dx\leq\frac{\lambda}{r}\IR k\n{u}^r\dx\leq\Big(\frac{r_5}{2p}\IR\pr{\n{\gradu}^p+\n{u}^p}\dx\Big)^{\frac{1}{r_5}}\\
&\hspace{2.7mm}\times\Big(\frac{r_4}{2q}\IR h\n{u}^q\dx\Big)^{\frac{1}{r_4}}
\bigg\{\lambda^\rho\Big(\frac{C_1}{r}\Big)^\rho\Big(\frac{2p}{r_5}\Big)^{\frac{\rho}{r_5}}\Big(\frac{2q}{r_4}\Big)^{\frac{\rho}{r_4}}
\Big(\IR k^{r_3}h^{-\beta}\dx\Big)^{\frac{\rho}{r_3}}\bigg\}^{\frac{1}{\rho}}\\
\leq&\,\frac{1}{2p}\IR\pr{\n{\gradu}^p+\n{u}^p}\dx+\frac{1}{2q}\IR h\n{u}^q\dx+\lambda^\rho\widetilde{C}_{hk},\nonumber
\end{split}
\end{equation}
so that
\begin{equation}\label{eq2.8}
\Jc(u)\geq\frac{1}{2p}\IR\pr{\n{\gradu}^p+\n{u}^p}\dx+\frac{1}{2q}\IR h\n{u}^q\dx-\lambda^\rho\widetilde{C}_{hk}.
\end{equation}

For our assumption {\bf(4)}, when $0\leq t<p$, similar to \eqref{eq2.7}, one has
\begin{equation}
\begin{split}
&\frac{\lambda}{r}\IR k\pr{u^+}^r\dx-\frac{1}{2q}\IR h\n{u}^q\dx\leq\frac{\lambda}{r}\IR k\n{u}^r\dx-\frac{1}{2q}\IR h\n{u}^q\dx\\
\leq&\,C(q,r,t,\lambda)\Big(\IR\Big[\frac{k^{q-t}(x)}{h^{r-t}(x)}\Big]^{\frac{\gamma}{q-r}}\dx\Big)^{\frac{1}{\gamma}}
\Big(\IR\n{u}^{\frac{\gamma t}{\gamma-1}}\dx\Big)^{\frac{\gamma-1}{\gamma}}\\
\leq&\,C_{hk}(q,r,t,\lambda)\Big(\IR\pr{\n{\gradu}^p+\n{u}^p}\dx\Big)^{\frac{t}{p}}\leq\frac{1}{2p}\IR\pr{\n{\gradu}^p+\n{u}^p}\dx+C_{hk\lambda}\nonumber
\end{split}
\end{equation}
for suitable constants $C(q,r,t,\lambda),C_{hk}(q,r,t,\lambda),C_{hk\lambda}>0$, which further implies
\begin{equation}
\frac{\lambda}{r}\IR k\pr{u^+}^r\dx\leq\frac{1}{2p}\IR\pr{\n{\gradu}^p+\n{u}^p}\dx+\frac{1}{2q}\IR h\n{u}^q\dx+C_{hk\lambda},\nonumber
\end{equation}
so that \eqref{eq2.8} holds after $\lambda^\rho\widetilde{C}_{hk}$ being replaced by $C_{hk\lambda}$.

On the other hand, when $t=p$ in our assumption {\bf(4)}, one has
\begin{equation}\label{eq2.9}
\begin{split}
&\frac{\lambda}{r}\IBRc k\pr{u^+}^r\dx-\frac{1}{2q}\IBRc h\n{u}^q\dx\\
\leq&\,C(q,r,t,\lambda)\Big(\IBRc\Big[\frac{k^{q-p}(x)}{h^{r-p}(x)}\Big]^{\frac{\gamma}{q-r}}\dx\Big)^{\frac{1}{\gamma}}
\Big(\IBRc\n{u}^{\frac{\gamma p}{\gamma-1}}\dx\Big)^{\frac{\gamma-1}{\gamma}}\\
\leq&\Big(\frac{1}{2p}-\varepsilon\Big)\IBRc\pr{\n{\gradu}^p+\n{u}^p}\dx\leq\Big(\frac{1}{2p}-\varepsilon\Big)\IR\pr{\n{\gradu}^p+\n{u}^p}\dx
\end{split}
\end{equation}
for a sufficiently large $R>0$ and some $\varepsilon$ with $0<\varepsilon<\frac{1}{2p}$, while
\begin{equation}\label{eq2.10}
\begin{split}
&\frac{\lambda}{r}\IBR k\pr{u^+}^r\dx-\frac{1}{2q}\IBR h\n{u}^q\dx\\
\leq&\,\varepsilon\IBR\pr{\n{\gradu}^p+\n{u}^p}\dx+\widetilde{C}_{hk\lambda}\leq\varepsilon\IR\pr{\n{\gradu}^p+\n{u}^p}\dx+\widetilde{C}_{hk\lambda}
\end{split}
\end{equation}
follows from the analysis of Lemma 2.4 in \cite{AT} seeing $h(x),k(x)\in\LlocR$ for another suitable constant $\widetilde{C}_{hk\lambda}>0$, so that we have \eqref{eq2.8}, with $\widetilde{C}_{hk\lambda}$ replacing $\lambda^\rho\widetilde{C}_{hk}$, via \eqref{eq2.9} and \eqref{eq2.10} (and the latter is derived through the subsequent estimates \eqref{eq2.11} and \eqref{eq2.12}).

Here, $\widetilde{C}_{hk},C_{hk\lambda},\widetilde{C}_{hk\lambda}>0$ are absolute constants, independent of $u$.

As a matter of fact, when $t=p$ in our assumption {\bf(4)}, given $M,\delta>0$, set
\begin{equation}
\left\{\begin{array}{ll}
\mathbf{X}=\bre{x\in\BR:k(x)<M\,\,\text{and}\,\,h(x)>\delta},\\
\mathbf{Y}=\bre{x\in\BR:k(x)<M\,\,\text{and}\,\,h(x)\leq\delta}\\
\hspace{4.1mm}\varsubsetneq\mathbf{Y}'=\bre{x\in\BR:h(x)\leq\delta},\\
\mathbf{Z}=\bre{x\in\BR:k(x)\geq M}.
\end{array}\right.\nonumber
\end{equation}
Then, $\mathscr{L}(\mathbf{Z})\to0$ as $M\to\infty$ and $\mathscr{L}(\mathbf{Y}')\to0$ as $\delta\to0$, so that
\begin{equation}\label{eq2.11}
\begin{split}
&\frac{\lambda}{r}\int_{\mathbf{Y}\cup\mathbf{Z}}k\pr{u^+}^r\dx-\frac{1}{2q}\int_{\mathbf{Y}\cup\mathbf{Z}}h\n{u}^q\dx\\
\leq&\,C(q,r,t,\lambda)\Big(\int_{\mathbf{Y}\cup\mathbf{Z}}\Big[\frac{k^{q-p}(x)}{h^{r-p}(x)}\Big]^{\frac{\gamma}{q-r}}\dx\Big)^{\frac{1}{\gamma}}
\Big(\int_{\mathbf{Y}\cup\mathbf{Z}}\n{u}^{\frac{\gamma p}{\gamma-1}}\dx\Big)^{\frac{\gamma-1}{\gamma}}\\
\leq&\,C(q,r,t,\lambda)\Big(\int_{\mathbf{Y}'\cup\mathbf{Z}}\Big[\frac{k^{q-p}(x)}{h^{r-p}(x)}\Big]^{\frac{\gamma}{q-r}}\dx\Big)^{\frac{1}{\gamma}}
\Big(\int_{\mathbf{Y}'\cup\mathbf{Z}}\n{u}^{\frac{\gamma p}{\gamma-1}}\dx\Big)^{\frac{\gamma-1}{\gamma}}\\
\leq&\,\frac{\varepsilon}{2}\int_{\mathbf{Y}'\cup\mathbf{Z}}\pr{\n{\gradu}^p+\n{u}^p}\dx\leq\frac{\varepsilon}{2}\IR\pr{\n{\gradu}^p+\n{u}^p}\dx
\end{split}
\end{equation}
provided $M$ is sufficiently large and $\delta$ is sufficiently small.
Next, for each $\tilde{t}$ with $0\leq\tilde{t}<p$ and some suitable constants $C_{M\delta}(q,r,t,\lambda),\widetilde{C}_{hk\lambda}>0$, one instead derives that
\begin{equation}\label{eq2.12}
\begin{split}
&\frac{\lambda}{r}\int_{\mathbf{X}}k\pr{u^+}^r\dx-\frac{1}{2q}\int_{\mathbf{X}}h\n{u}^q\dx\\
\leq&\,C(q,r,t,\lambda)\Big(\int_{\BR}\Big[\frac{k^{q-\tilde{t}}(x)}{h^{r-\tilde{t}}(x)}\Big]^{\frac{\gamma}{q-r}}\dx\Big)^{\frac{1}{\gamma}}
\Big(\int_{\BR}\n{u}^{\frac{\gamma\tilde{t}}{\gamma-1}}\dx\Big)^{\frac{\gamma-1}{\gamma}}\\
\leq&\,C_{M\delta}(q,r,t,\lambda)\Big(\IBR\pr{\n{\gradu}^p+\n{u}^p}\dx\Big)^{\frac{\tilde{t}}{p}}\leq\frac{\varepsilon}{2}\IR\pr{\n{\gradu}^p+\n{u}^p}\dx+\widetilde{C}_{hk\lambda}.
\end{split}
\end{equation}

Since $\Jc$ is coercive and $\E$ is reflexive (see \cite[Proposition A.11]{AP1}), each sequence $\bre{u_l:l\geq1}$ in $\E$ with $\Jc(u_l)$ bounded has a subsequence, using again $\bre{u_l:l\geq1}$, such that $u_l\rightharpoonup u\in\E$.
The compact embedding $\E\hookrightarrow\LrkR$, via assumptions {\bf(1)-(2)}, leads to
\begin{equation}\label{eq2.13}
\lim_{l\to\infty}\IR k\pr{u^+_l}^r\dx=\IR k\pr{u^+}^r\dx
\end{equation}
for yet another subsequence, which along with the lower semicontinuity of norms in $\WpR$ and $\LqhR$ generates the sequentially weak lower semicontinuity of $\Jc$ in $\E$.
\end{proof}

Define
\begin{equation}\label{eq2.14}
\tilde{\lambda}=\inf_{u\in\E,\nm{u^+}_{\LrkRnm}=1}\bre{\frac{r}{p}\IR\pr{\n{\gradu}^p+\n{u}^p}\dx+\frac{r}{q}\IR h\n{u}^q\dx};
\end{equation}
then, $\tilde{\lambda}>0$.
In fact, if not, then there is a sequence $\bre{u_l:l\geq1}$ in $\E$ such that $\nm{u^+_l}_{\LrkRnm}=1$ but
$\frac{r}{p}\IR\pr{\n{\nabla u_l}^p+\n{u_l}^p}\dx+\frac{r}{q}\IR h\n{u_l}^q\dx\to0$; this would yield $\nm{u_l}_\E\to0$, contradicting the compact embedding $\E\hookrightarrow\LrkR$ in view of $\nm{u_l}_{\LrkRnm}\geq\nm{u^+_l}_{\LrkRnm}=1$.

Denote by $\lambda^*$ the supermum of $\lambda$ such that \eqref{eq1.4} has no nontrivial positive solution for each $\mu<\lambda$, and denote by $\lambda^{**}$ the infimum of $\lambda$ such that \eqref{eq1.4} has at least one nontrivial positive solution at $\lambda$.
Then, we have $0\leq\lambda^*=\lambda^{**}\leq\tilde{\lambda}$.
Actually, for each $\lambda>\tilde{\lambda}$,
\begin{equation}
\lambda\IR k\pr{\vl^+}^r\dx>\frac{r}{p}\IR\pr{\n{\gradvl}^p+\n{\vl}^p}\dx+\frac{r}{q}\IR h\n{\vl}^q\dx\nonumber
\end{equation}
follows, with some $\vl\in\E$, through homogeneity, which can be rewritten as
\begin{equation}
\Jc(\vl)=\frac{1}{p}\IR\pr{\n{\gradvl}^p+\n{\vl}^p}\dx+\frac{1}{q}\IR h\n{\vl}^q\dx-\frac{\lambda}{r}\IR k\pr{\vl^+}^r\dx<0.\nonumber
\end{equation}
This, together with Lemma \ref{L2.4}, leads to $\Jc(\ul)=\inf\limits_{u\in\E}\Jc(u)\leq\Jc(\vl)<0$ for an $\ul\geq0$ in $\E$ since $\Jc(\n{\ul})\leq\Jc(\ul)$, which is a nontrivial positive solution to equation \eqref{eq1.4}.
So, one has $\lambda^{**}\leq\tilde{\lambda}$.
On the other hand, if $\lambda^*>\lambda^{**}$, one would find a $\lambda'\in[\lambda^{**},\lambda^*)$ such that equation \eqref{eq1.4} has at least one nontrivial positive solution at $\lambda'$ according to the definition of $\lambda^{**}$ - this however is against the definition of $\lambda^*$; if $\lambda^*<\lambda^{**}$, one would find a $\lambda'\in(\lambda^*,\lambda^{**}]$ and then a $\mu'(<\lambda')$ such that equation \eqref{eq1.4} has at least one nontrivial positive solution at $\mu'$ according to the definition of $\lambda^*$ - this however is against the definition of $\lambda^{**}$.
So, $\lambda^*=\lambda^{**}$.

Write $\lambda_1:=\lambda^*=\lambda^{**}$ in the sequel.

\begin{prop}\label{P2.5}
Under the {\bf Standing Assumptions (1)-(3)} or {\bf(1)-(2)\&(4)}, $\lambda_1\geq0$ and equation \eqref{eq1.4} has a nontrivial positive solution $\ul\geq0$ in $\E$ for every $\lambda>\lambda_1$.
\end{prop}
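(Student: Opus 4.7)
The non-negativity $\lambda_1 \geq 0$ is immediate: testing equation \eqref{eq1.4} against any candidate $u \geq 0$ and integrating produces the identity \eqref{eq2.5}, whose right-hand side is non-positive when $\lambda \leq 0$, forcing $u \equiv 0$. Hence the set $\Lambda$ of parameters admitting a nontrivial non-negative solution satisfies $\Lambda \subset (0,\infty)$, so $\lambda_1 = \inf \Lambda \geq 0$. The existence claim in the window $\lambda > \tilde{\lambda}$ is already dispatched by the direct minimization argument in the paragraph preceding this proposition, so the remaining obligation is to produce a solution at any $\lambda \in (\lambda_1, \tilde{\lambda}]$.

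For that window, my plan is the classical subsolution method. Invoking $\lambda_1 = \lambda^{**} = \inf \Lambda$, I fix some $\mu \in (\lambda_1, \lambda)$ at which \eqref{eq1.4} admits a nontrivial positive solution $\um \in \E$. Since $\lambda > \mu$,
\begin{equation*}
\Lp\um + \um^{p-1} = \mu k \um^{r-1} - h \um^{q-1} \leq \lambda k \um^{r-1} - h \um^{q-1},
\end{equation*}
with strict inequality on $\{\um > 0\}$, so $\um$ is a strict subsolution of \eqref{eq1.4} at level $\lambda$. I would then minimize $\Jc$ over the closed convex set $\mathcal{C}_\mu := \{u \in \E : u \geq \um \text{ a.e.\ in } \RN\}$. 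Lemma \ref{L2.4} supplies coercivity and sequential weak lower semicontinuity of $\Jc$ on $\E$, while reflexivity of $\E$ together with the weak closedness of $\mathcal{C}_\mu$ delivers a minimizer $\ul \in \mathcal{C}_\mu$. Since $\um$ is a critical point of $\mathcal{J}_\mu$, for any $\varphi \in \E$ with $\varphi \geq 0$ and $\varphi \not\equiv 0$ on $\{\um > 0\}$ one computes
\begin{equation*}
\aip{\Jc'(\um),\varphi} = (\mu - \lambda)\IR k \um^{r-1}\varphi\dx < 0,
\end{equation*}
so $\Jc(\um + \varepsilon\varphi) < \Jc(\um)$ for small $\varepsilon > 0$, ruling out $\ul \equiv \um$ and ensuring that $\ul \geq \um$ is nontrivial and non-negative.

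The principal obstacle is to upgrade $\ul$ from a constrained minimizer, which a priori only satisfies the obstacle-type inequality $\aip{\Jc'(\ul), v - \ul} \geq 0$ for every $v \in \mathcal{C}_\mu$, into a free critical point of $\Jc$, i.e., a weak solution of \eqref{eq1.4}. I would address this through the classical comparison/truncation trick: given arbitrary $\varphi \in \E$ and small $t > 0$, the competitor $v_t := \max(\um,\ul+t\varphi) = \ul + t\varphi + (\um - \ul - t\varphi)^+$ lies in $\mathcal{C}_\mu$, so minimality yields $\Jc(v_t) \geq \Jc(\ul)$. Decomposing the difference as $t\aip{\Jc'(\ul),\varphi}$ plus a correction supported on the set $S_t := \{\um > \ul + t\varphi\} \subset \{\ul = \um\} \cap \{\varphi < 0\}$, whose Lebesgue measure and associated integrals shrink to $0$ as $t \to 0^+$ (because $\ul \geq \um$), and invoking the weak-equation identity satisfied by $\um$ on $S_t$ to absorb this correction into an $o(t)$ remainder, one concludes $\aip{\Jc'(\ul),\varphi} \geq 0$ for every $\varphi \in \E$. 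Replacing $\varphi$ by $-\varphi$ promotes this to equality, and $\ul$ is thus the desired nontrivial non-negative weak solution of \eqref{eq1.4}.
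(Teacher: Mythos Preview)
Your strategy is precisely the paper's: produce a subsolution $u_\mu$ from a nearby parameter, minimize $\Jc$ over the closed convex set $\{u\in\E:u\geq u_\mu\}$, and then use a truncation competitor to free the constrained minimizer. The paper works with test functions $\varphi\in C^1_c(\RN)$ and passes directly through the variational inequality $\langle\Jc'(u_\lambda),v-u_\lambda\rangle\geq 0$ rather than expanding $\Jc(v_t)-\Jc(u_\lambda)$, but after that the competitors coincide (the paper's $v_\varepsilon=\max\{u_\mu,\,u_\lambda-\varepsilon\varphi\}$ is your $v_t$ with the sign of $\varphi$ flipped).

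One genuine slip in your sketch: the inclusion $S_t\subset\{u_\lambda=u_\mu\}\cap\{\varphi<0\}$ is false (points with $u_\lambda(x)>u_\mu(x)$ and $\varphi(x)$ sufficiently negative lie in $S_t$), and the Lebesgue measure of $S_t$ need \emph{not} shrink to zero, since $S_t$ decreases only to $\{u_\lambda=u_\mu\}\cap\{\varphi<0\}$, which can have positive measure. What actually drives the $o(t)$ estimate is that, after subtracting the subsolution inequality $\langle\Jc'(u_\mu),\,(u_\mu-u_\lambda-t\varphi)^+\rangle\leq 0$, the remaining integrands on $S_t$ are differences such as $u_\lambda^{p-1}-u_\mu^{p-1}$ and $|\nabla u_\lambda|^{p-2}\nabla u_\lambda-|\nabla u_\mu|^{p-2}\nabla u_\mu$, multiplied by the correction $(u_\mu-u_\lambda-t\varphi)^+\leq t|\varphi|$; these differences vanish on the coincidence set $\{u_\lambda=u_\mu\}$ (Stampacchia's lemma for the gradients), so dominated convergence over the shrinking excess $S_t\setminus\{u_\lambda=u_\mu\}$ plus the monotonicity term (which has a favorable sign and is discarded) give the required $o(t)$. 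The paper's displayed estimate encodes exactly this, though it too leaves the last step implicit. Your verification that $u_\lambda\neq u_\mu$ is correct but unneeded: $u_\lambda\geq u_\mu\not\equiv 0$ already makes $u_\lambda$ nontrivial.
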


\begin{proof}
By definition, $\lambda_1=\lambda^*$; so, if $\ul\geq0$ is a nontrivial positive solution to \eqref{eq1.4} in $\E$, then $\lambda\geq\lambda_1$.
Now, we verify \eqref{eq1.4} has at least a nontrivial solution $\ul\geq0$ in $\E$ for each $\lambda>\lambda_1$ via Struwe \cite[Theorem 2.4]{St}; see also
\cite[Theorem 4.2]{AP1} and \cite[Proposition 3.3]{Ha4}.

By definition, $\lambda_1=\lambda^{**}$; so, one finds a $\mu\in[\lambda_1,\lambda)$ at which \eqref{eq1.4} has a nontrivial solution $\um\geq0$ in $\E$ that is a subsolution to \eqref{eq1.4} at $\lambda$.
Consider the constrained minimizing problem $\inf\limits_{u\in\Ms}\Jc(u)$ for $\Ms=\bre{u\in\E:u\geq\um\geq0}$.
Since $\Ms$ is closed and convex, it is weakly closed in $\E$.
Thus, Lemma \ref{L2.4} yields an $\ul(\geq\um)$ in $\Ms$ with $\Jc(\ul)=\inf\limits_{u\in\Ms}\Jc(u)$.
Take $\varphi\in\CCON$; set $\vv=\max\{0,\um-\ul+\varepsilon\varphi\}\geq0$ and $\ve=\vv+\ul-\varepsilon\varphi\hspace{0.2mm}(\geq\um)$ in $\Ms$ for some $\varepsilon>0$.
Then, we have $\Jc'(\ul)(\ul)\leq\Jc'(\ul)(\ve)$, which further leads to
\begin{equation}\label{eq2.15}
\Jc'(\ul)(\varphi)\leq\frac{1}{\varepsilon}\Jc'(\ul)(\vv).
\end{equation}

Put $\Ov=\bre{x\in\RN:\vv(x)>0}=\bre{x\in\RN:\ul(x)-\um(x)<\varepsilon\varphi(x)}\varsubsetneq\operatorname{supp}(\varphi^+)$.
As $\um$ is a subsolution to \eqref{eq1.4} at $\lambda$ and $\vv\geq0$, one has $\Jc'(\um)(\vv)\leq0$ so that
\begin{equation}
\begin{split}
&\,\Jc'(\ul)(\vv)\leq\Jc'(\ul)(\vv)-\Jc'(\um)(\vv)\\
\leq&-\IOv\big(\n{\gradul}^{p-2}\gradul-\n{\gradum}^{p-2}\gradum\big)\cdot\pr{\gradul-\gradum}\dx\\
&+\varepsilon\bigg\{\IOv\big(\n{\gradul}^{p-2}\gradul-\n{\gradum}^{p-2}\gradum\big)\cdot\nabla\varphi\,\dx+\IOv\big(\ul^{p-1}-\um^{p-1}\big)\n{\varphi}\dx\\
&\hspace{8mm}+\IOv h\big(\ul^{q-1}-\um^{q-1}\big)\n{\varphi}\dx+\lambda\IOv k\big(\ul^{r-1}-\um^{r-1}\big)\n{\varphi}\dx\bigg\}\\
\leq&\,\varepsilon\Big\{\nm{\varphi}_{\WpOvnm}\big(\nm{\ul}^{p-1}_{\WpOvnm}+\nm{\um}^{p-1}_{\WpOvnm}\big)\\
&\hspace{4mm}+\nm{\varphi}_{\LqhOvnm}\big(\nm{\ul}^{q-1}_{\LqhOvnm}+\nm{\um}^{q-1}_{\LqhOvnm}\big)\\
&\hspace{4mm}+\lambda\nm{\varphi}_{\LrkOvnm}\big(\nm{\ul}^{r-1}_{\LrkOvnm}+\nm{\um}^{r-1}_{\LrkOvnm}\big)\Big\}=o(\varepsilon)\nonumber
\end{split}
\end{equation}
as $\varepsilon\to0^+$, seeing $0<\vv\leq\varepsilon\n{\varphi}$ on $\Ov$.
This combined with \eqref{eq2.15} yields $\Jc'(\ul)(\varphi)\leq0$ for every $\varphi\in\CCON$; so, it further implies $\Jc'(\ul)(-\varphi)\leq0$.
Via density, $\Jc'(\ul)(v)=0$ for all $v\in\E$.
Therefore, $\ul(\geq\um\geq0)$ is a nontrivial solution to equation \eqref{eq1.4} at $\lambda$.
\end{proof}

Proposition \ref{P2.5} obviously provides the proof to the first assertion of Theorem \ref{T1.1}, and below we shall discuss the second twofold assertion of Theorem \ref{T1.1}.

\begin{lem}\label{L2.6}
Under the {\bf Standing Assumption (5)}, there exists an absolute constant $C_k>0$ such that each nontrivial solution $\ul\in\E$ to equation $\eqref{eq1.4}$ satisfies
\begin{equation}\label{eq2.16}
\IR k\n{\ul}^r\dx\geq(\lambda\hspace{0.2mm}C_k)^{\frac{r}{p-r}}.
\end{equation}
Besides, when the {\bf Standing Assumption (3)} or {\bf(4)} also holds, then there is another absolute constant $\widehat{C}_{hk}>0$ such that $\lambda_1\geq\widehat{C}_{hk}>0$.
\end{lem}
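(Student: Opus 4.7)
The plan is to test \eqref{eq1.4} with $\ul$ itself to recover the identity \eqref{eq2.5}, then use H\"older's inequality together with the Sobolev embedding supplied by Standing Assumption {\bf(5)} to bound $\IR k\n{\ul}^r\dx$ from above by a power of $\nm{\ul}_{\WpRnm}$, and finally invert the resulting inequality to obtain \eqref{eq2.16}. The additional lower bound on $\lambda_1$ would then follow by juxtaposing \eqref{eq2.16} with the upper energy bounds of Lemma \ref{L2.3}.

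More concretely, I would start from \eqref{eq2.5}, drop the non-negative term $\IR h\n{\ul}^q\dx$ to get $\nm{\ul}^p_{\WpRnm}\leq\lambda\IR k\n{\ul}^r\dx$, and then estimate the right-hand side. Under {\bf(5-a)} the hypothesis $\sigma\in\big[\frac{p^*}{p^*-r},\infty\big]$ forces $\frac{r\sigma}{\sigma-1}\in[r,p^*]$, so H\"older combined with the Sobolev embedding $\WpR\hookrightarrow L^{r\sigma/(\sigma-1)}(\RN)$ yields
\begin{equation*}
\IR k\n{\ul}^r\dx\leq\nm{k}_{\LsR}\nm{\ul}^r_{L^{r\sigma/(\sigma-1)}(\RN)}\leq C^*_k\nm{\ul}^r_{\WpRnm}
\end{equation*}
for an absolute constant $C^*_k>0$; case {\bf(5-b)} is handled identically using that $\WpR$ embeds into every $L^\tau(\RN)$ with $p\leq\tau<\infty$ when $p\geq N$. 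Feeding this back and using $\ul\not\equiv0$ gives $\nm{\ul}^{r-p}_{\WpRnm}\geq(\lambda C^*_k)^{-1}$; re-inserting this into $\IR k\n{\ul}^r\dx\geq\lambda^{-1}\nm{\ul}^p_{\WpRnm}$ (which follows again from \eqref{eq2.5}) produces \eqref{eq2.16} with $C_k:=(C^*_k)^{p/r}$.

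For the second assertion I would combine \eqref{eq2.16} with the energy bounds of Lemma \ref{L2.3}. Under {\bf(3)}, \eqref{eq2.3} together with \eqref{eq2.5} delivers $\lambda\IR k\n{\ul}^r\dx\leq\lambda^\rho C_{hk}$, so \eqref{eq2.16} forces
\begin{equation*}
\lambda^{\rho-1+r/(r-p)}\geq\big(C^{r/(r-p)}_k C_{hk}\big)^{-1}.
\end{equation*}
Since $\rho>0$ and $r/(r-p)>1$, the exponent on the left is strictly positive, and this yields $\lambda\geq\widehat{C}_{hk}>0$. Under {\bf(4)} with $t<p$, \eqref{eq2.4} bounds $\nm{\ul}^p_{\WpRnm}\leq(\lambda^\varrho C'_{hk})^{p/(p-t)}$, and combining with the H\"older estimate $\IR k\n{\ul}^r\dx\leq C^*_k\nm{\ul}^r_{\WpRnm}$ and \eqref{eq2.16} once more produces a positive power of $\lambda$ bounded below by an absolute constant.

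The main obstacle I anticipate lies in the borderline case $t=p$ of {\bf(4)}, where \eqref{eq2.4} becomes formally vacuous (its left-hand exponent vanishes) and one must instead invoke the dichotomy argument \eqref{eq2.9}--\eqref{eq2.12} from Lemma \ref{L2.4} to recover the missing upper bound on the energy; this is presumably the reason for the parenthetical ``except for $t=p$'' in the statement of Theorem \ref{T1.1}. Minor care is also needed at the endpoints $\sigma=\infty$ in {\bf(5)} and at $p=N$, where the Sobolev exponent $r\sigma/(\sigma-1)$ should be interpreted as any sufficiently large finite number, but these are routine adaptations.
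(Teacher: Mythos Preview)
Your argument is correct and coincides with the paper's: the paper also combines the H\"older--Sobolev estimate \eqref{eq2.17} (your $C^*_k$ estimate, recast so that $C_k=(C^*_k)^{p/r}$) with the identity \eqref{eq2.5} to obtain \eqref{eq2.16}, and then juxtaposes this with \eqref{eq2.3} or \eqref{eq2.4} exactly as you describe.

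Your one misconception concerns the case $t=p$ in Assumption {\bf(4)}. You call \eqref{eq2.4} ``formally vacuous'' there, but it is not: since $\ul\not\equiv0$, the left-hand side equals $1$, so \eqref{eq2.4} reads $1\leq\lambda^\varrho C'_{hk}$, which \emph{directly} yields $\lambda\geq (C'_{hk})^{-1/\varrho}>0$ without any further work. The paper handles this case in one line for precisely this reason; it is in fact the easiest case, not an obstacle. The parenthetical ``except for $t=p$'' in Theorem \ref{T1.1} has nothing to do with Lemma \ref{L2.6}; it refers instead to Proposition \ref{P2.7}, where for $t=p$ the estimate \eqref{eq2.4} no longer furnishes an \emph{upper} bound on $\nm{\ull}_{\WpRnm}$ and so the compactness argument showing existence at $\lambda=\lambda_1$ breaks down.
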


\begin{proof}
First, when our assumption {\bf(5)} is true, one observes
\begin{equation}\label{eq2.17}
\begin{split}
&\Big(\IR k\n{u}^r\dx\Big)^{\frac{p}{r}}
\leq\bigg\{\Big(\IR k^\sigma\dx\Big)^{\frac{1}{\sigma}}\Big(\IR\n{u}^{\frac{\sigma r}{\sigma-1}}\dx\Big)^{\frac{\sigma-1}{\sigma}}\bigg\}^{^{\frac{p}{r}}}\\
\leq&\,C_k\IR\pr{\n{\gradu}^p+\n{u}^p}\dx.
\end{split}
\end{equation}
Combining \eqref{eq2.17} with \eqref{eq2.5}, one has $\big(\IR k\n{\ul}^r\dx\big)^{\frac{p}{r}}\leq\lambda\hspace{0.2mm}C_k\IR k\n{\ul}^r\dx$ for all nontrivial solutions $\ul$ to \eqref{eq1.4} (not necessarily positive), which in turn yields \eqref{eq2.16}.

Now, when our assumption {\bf(3)} also holds, by \eqref{eq2.3}, \eqref{eq2.16} and \eqref{eq2.17}, $(\lambda\hspace{0.2mm}C_k)^{\frac{p}{p-r}}\leq C_k\lambda^\rho C_{hk}$ follows so that $\lambda\geq\breve{C}_{hk}>0$ since $p<r$, which in particular yields $\lambda_1\geq\breve{C}_{hk}>0$.
On the other hand, when our assumption {\bf(4)} also holds for $t=p$, \eqref{eq2.4} automatically implies $\lambda_1\geq\breve{C}'_{hk}>0$; if $0\leq t<p$ in {\bf(4)}, then by \eqref{eq2.4}, \eqref{eq2.16} and \eqref{eq2.17}, $(\lambda\hspace{0.2mm}C_k)^{\frac{p}{p-r}}\leq C_k(\lambda^\varrho C'_{hk})^{\frac{p}{p-t}}$ follows so that likewise $\lambda_1\geq\breve{C}'_{hk}>0$.
Thus, the proof is done for $\widehat{C}_{hk}=\min\big\{\breve{C}_{hk},\breve{C}'_{hk}\big\}>0$.
\end{proof}

\begin{prop}\label{P2.7}
Under the {\bf Standing Assumptions (1)-(3)\&(5)} or {\bf(1)-(2)\&(4)-(5)}, we have $\lambda_1>0$ and equation \eqref{eq1.4} has a nontrivial positive solution $\ul$ in $\E$ if and only if $\lambda\geq\lambda_1$ (except for $t=p$); moreover, for $\lambda_2:=\tilde{\lambda}\hspace{0.2mm}(\geq\lambda_1)$ as introduced in \eqref{eq2.14}, equation \eqref{eq1.4} has at least two distinct nontrivial positive solutions $\ul\neq\ult$ in $\E$ for every $\lambda>\lambda_2$.
\end{prop}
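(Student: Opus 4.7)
There are three ingredients in Proposition \ref{P2.7}: strict positivity of $\lambda_1$, attainment of the threshold at $\lambda=\lambda_1$ (except when $t=p$), and a second positive solution for $\lambda>\lambda_2=\tilde{\lambda}$. The first is immediate from Lemma \ref{L2.6}. For attainment at $\lambda_1$, I would approximate from above. Pick $\lambda(l)\downarrow\lambda_1$ and let $\ull\in\E$ be the positive solutions produced by Proposition \ref{P2.5}. Lemma \ref{L2.3} (either \eqref{eq2.3} under Assumption \textbf{(3)} or \eqref{eq2.4} under Assumption \textbf{(4)} with $t<p$), combined with the boundedness of $\lambda(l)$, makes $\{\nm{\ull}_\E\}$ uniformly bounded in $l$. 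Extract a weakly convergent subsequence $\ull\rightharpoonup u_{\lambda_1}$ in $\E$; the compact embedding $\E\hookrightarrow\LrkR$ from Propositions \ref{P2.1}--\ref{P2.2} then lets me pass to the limit in the weak formulation of \eqref{eq1.4}, so $u_{\lambda_1}\geq0$ solves the equation at $\lambda_1$. Nontriviality is inherited from the lower bound \eqref{eq2.16}: $\IR k|\ull|^r\dx\geq(\lambda(l)C_k)^{r/(p-r)}\to(\lambda_1C_k)^{r/(p-r)}>0$, which survives the $\LrkR$-limit. The exclusion ``except for $t=p$'' is precisely because \eqref{eq2.4} with $t=p$ degenerates to $1\leq\lambda^\varrho C'_{hk}$, giving no control on $\nm{\ull}_\E$.

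For the second solution when $\lambda>\lambda_2=\tilde{\lambda}$, the discussion preceding Proposition \ref{P2.5} already furnishes a global minimizer $\ul\geq0$ of $\Jc$ with $\Jc(\ul)<0$. I would obtain $\ult\neq\ul$ via a mountain pass construction around the origin. Under Assumption \textbf{(5)}, H\"older's inequality with $k\in\LsR$ and Sobolev embedding into $L^{\sigma r/(\sigma-1)}(\RN)$ yield $\IR k(u^+)^r\dx\leq C\nm{u}^r_{\WpRnm}$, so
\begin{equation*}
\Jc(u)\geq\tfrac{1}{p}\nm{u}^p_{\WpRnm}+\tfrac{1}{q}\nm{u}^q_{\LqhRnm}-\tfrac{\lambda C}{r}\nm{u}^r_{\WpRnm}.
\end{equation*}
Writing $a=\nm{u}_{\WpRnm}$, $b=\nm{u}_{\LqhRnm}$ with $a+b=\rho$, I would choose $\rho>0$ small enough that $\tfrac{1}{p}-\tfrac{\lambda C}{r}a^{r-p}\geq\tfrac{1}{2p}$ whenever $a\leq\rho$; splitting into $a\geq\rho/2$ (using $p<r$) and $b\geq\rho/2$ (using $r<q$) then produces a uniform lower bound $\Jc(u)\geq\alpha>0$ on the sphere $\nm{u}_\E=\rho$. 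Since $\Jc(0)=0$, $\Jc(\ul)<0$, and $\ul$ is forced outside this sphere, the mountain pass geometry is in place. Coupled with Palais-Smale (supplied by coercivity from Lemma \ref{L2.4}, the compact embedding of Propositions \ref{P2.1}--\ref{P2.2}, and the $(S_+)$-property of $\Lp$), the mountain pass theorem delivers a critical point $\ult$ at level $\geq\alpha>0>\Jc(\ul)$, so $\ult\neq\ul$. Its nonnegativity follows by testing the Euler-Lagrange equation against $\ult^-$, which annihilates the $(u^+)^{r-1}$ term and leaves only nonnegative contributions from the $p$- and $q$-terms.

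The delicate step will be verifying the Palais-Smale condition on the mixed-norm space $\E$. For a bounded PS sequence $\{w_l\}\subset\E$, weak convergence in $\E$ and strong convergence in $\LrkR$ come readily from coercivity and Propositions \ref{P2.1}--\ref{P2.2}, but upgrading to strong convergence in $\E$ requires simultaneously extracting the $(S_+)$-property of the $p$-Laplacian for the $\WpR$ component and a parallel monotonicity argument for the weighted $\LqhR$ component, then reconciling the two nonlinearities in a single space. This is where the mixed structure of the problem genuinely interacts with the function space.
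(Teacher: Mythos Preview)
Your overall strategy matches the paper's: $\lambda_1>0$ from Lemma~\ref{L2.6}, attainment at $\lambda_1$ by approximating from above with the solutions of Proposition~\ref{P2.5}, and a second solution by mountain pass over the negative global minimizer. Two small points need tightening, and there is one genuine (but minor) difference worth noting.

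\textbf{Two gaps to patch.} First, under Assumption~\textbf{(4)} with $0\le t<p$, estimate \eqref{eq2.4} only controls $\nm{\ull}_{\WpRnm}$, not $\nm{\ull}_{\LqhRnm}$. The paper closes this by rewriting \eqref{eq2.5} with only \emph{half} of $\IR h\n{\ull}^q\dx$ moved to the right and rerunning the \eqref{eq2.7} analysis, which then bounds the $\LqhR$-norm in terms of the (already bounded) $\WpR$-norm. You should insert this step before claiming $\{\ull\}$ is bounded in $\E$. Second, in the attainment part you write that the compact embedding ``lets me pass to the limit in the weak formulation''. It does for the $k\ull^{r-1}v$ term, but not for $\IR\n{\gradull}^{p-2}\gradull\cdot\gradv\dx$ or $\IR h\ull^{q-1}v\dx$: weak convergence in $\E$ does not identify these limits. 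The paper first upgrades to strong convergence in $\E$ via exactly the monotonicity\,/\,$(S_+)$ device you describe in your Palais--Smale paragraph (testing $\Jcll'(\ull)-\Jc'(\omega)$ against $\ull-\omega$; see \eqref{eq2.19}), and only then passes to the limit. You already have this argument; you just need to invoke it here as well, not only in the PS step.

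\textbf{A genuine difference in the mountain-pass setup.} The paper observes that \eqref{eq2.21} gives the geometry on $\WpR$-spheres rather than $\E$-spheres, and therefore invokes the Candela--Palmieri mountain pass theorem (which produces a PS sequence without requiring the geometry in the ambient norm). Your case split $a\ge\rho/2$ versus $b\ge\rho/2$, after first forcing $\tfrac{1}{p}a^p-\tfrac{\lambda C}{r}a^r\ge\tfrac{1}{2p}a^p$ for all $a\le\rho$, gives $\Jc(u)\ge\min\{\tfrac{1}{2p}(\rho/2)^p,\tfrac{1}{q}(\rho/2)^q\}>0$ directly on the $\E$-sphere $\nm{u}_\E=\rho$, so the \emph{standard} mountain pass theorem applies. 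This is correct and a bit more elementary than the paper's route; the price is that you must verify Palais--Smale on $\E$ (which you do via coercivity plus the $(S_+)$ argument), whereas the paper's version only needs the PS sequence to be handled a posteriori.
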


\begin{proof}
Recall $\lambda_1>0$ has been proven.
So, we only need to show \eqref{eq1.4} has a nontrivial positive solution at $\lambda_1$.
Assume $\bre{\lambda(l)>\lambda_1:l\geq1}$ decrease to $\lambda_1$, with $\bre{\ull\geq0:l\geq1}$ an associated sequence of nontrivial solutions to \eqref{eq1.4}, which is bounded in $\E$ by Lemma \ref{L2.3}.

Actually, \eqref{eq2.3} provides the boundedness of $\bre{\ull\geq0:l\geq1}$ in $\E$ immediately while \eqref{eq2.4} implies that of $\bre{\ull\geq0:l\geq1}$ in $\WpR$ for
$0\leq t<p$ in {\bf(4)}; rewrite \eqref{eq2.5} by switching only one half of $\IR h\n{\ul}^q\dx$ to the righthand side (but keeping the other half) and reuse the analysis of \eqref{eq2.7} to observe the boundedness of $\bre{\ull\geq0:l\geq1}$ in $\LqhR$.

Thus, there exists a subsequence $\bre{\ull:l\geq1}$, using the same index $\lambda(l)$, with $\ull\rightharpoonup\omega$ in $\E$ and $\ull\to\omega$ in $\LrkR$ for some function $\omega\in\E$ such that $\ull\to\omega\geq0$ \textsl{a.e.} on $\RN$.
Then, one has \eqref{eq2.13} for $\ull,\omega\geq0$ (instead of $u_l,u$) and
\begin{equation}
\lim_{l\to\infty}\IR k\ull^{r-1}v\dx=\IR k\omega^{r-1}v\dx,\hspace{6mm}\forall\hspace{2mm}v\in\E;\nonumber
\end{equation}
see for example \cite[Lemma 3.4]{AP1}.
Keep in mind $\Jcll'(\ull)=0$; that is,
\begin{equation}\label{eq2.18}
\begin{split}
&\IR\n{\gradull}^{p-2}\gradull\cdot\gradv\dx+\IR\ull^{p-1}v\dx+\IR h\ull^{q-1}v\dx\\
=&\,\lambda(l)\IR k\ull^{r-1}v\dx,\hspace{6mm}\forall\hspace{2mm}v\in\E.
\end{split}
\end{equation}

As $\Jc'(\omega)$ is a continuous, linear functional in $\E$,
\begin{equation}\label{eq2.19}
\begin{split}
0\leftarrow&\,\Jcll'(\ull)(\ull-\omega)+\lambda(l)\IR k\ull^{r-1}(\ull-\omega)\dx\\
&-\Jc'(\omega)(\ull-\omega)-\lambda\IR k\omega^{r-1}(\ull-\omega)\dx\\
=&\IR\big(\n{\gradull}^{p-2}\gradull-\n{\gradw}^{p-2}\gradw\big)\cdot(\gradull-\gradw)\dx\\
&+\IR\big(\ull^{p-1}-\omega^{p-1}\big)(\ull-\omega)\dx+\IR h\big(\ull^{q-1}-\omega^{q-1}\big)(\ull-\omega)\dx
\end{split}
\end{equation}
follows when $l\to\infty$, from which, together with \cite[Lemma 3.2]{Ha2}, one concludes $\ull\to\omega$ in $\E$.
This particularly implies $\gradull\to\gradw$ \textsl{a.e.} on $\RN$, so that we also have
\begin{equation}
\begin{split}
&\lim_{l\to\infty}\IR\n{\gradull}^{p-2}\gradull\cdot\gradv\dx=\IR\n{\gradw}^{p-2}\gradw\cdot\gradv\dx,\\
&\lim_{l\to\infty}\IR\ull^{p-1}v\dx=\IR\omega^{p-1}v\dx,\\
&\lim_{l\to\infty}\IR h\ull^{q-1}v\dx=\IR h\omega^{q-1}v\dx,\hspace{6mm}\forall\hspace{2mm}v\in\E.\nonumber
\end{split}
\end{equation}

Recall $\omega\geq0$.
Upon letting $l\to\infty$ on both sides of \eqref{eq2.18}, for all $v\in\E$, we have
\begin{equation}\label{eq2.20}
\IR\n{\gradw}^{p-2}\gradw\cdot\gradv\dx+\IR\omega^{p-1}v\dx+\IR h\omega^{q-1}v\dx=\lambda_1\IR k\omega^{r-1}v\dx.
\end{equation}
Because \eqref{eq2.16} yields $\IR k\omega^r\dx\geq(\lambda(1)\hspace{0.2mm}C_k)^{\frac{r}{p-r}}>0$ in view of the compact embedding $\E\hookrightarrow\LrkR$, $u_{\lambda_1}:=\omega\geq0$ is a nontrivial positive solution to equation \eqref{eq1.4} in $\E$.

On the other hand, from the discussions in \cite[Lemma 3]{PR} or \cite[Lemma 3]{Ra}, one knows any other solution $\ult\geq0$ to \eqref{eq1.4} at $\lambda\hspace{0.2mm}(>\lambda_2)$, if it exists, should satisfy $\ult\leq\ul$ with $\Jc(\ul)<0$.
Furthermore, it is easily seen from \eqref{eq2.17} that
\begin{equation}\label{eq2.21}
\begin{split}
&\Jc(u)\geq\frac{1}{p}\IR\pr{\n{\gradu}^p+\n{u}^p}\dx-\frac{\lambda}{r}\Big(C_k\IR\pr{\n{\gradu}^p+\n{u}^p}\dx\Big)^{\frac{r}{p}}\\
\geq&\bigg\{\frac{1}{p}-\frac{\lambda}{r}C^{\frac{r}{p}}_k\Big(\IR\pr{\n{\gradu}^p+\n{u}^p}\dx\Big)^{\frac{r-p}{p}}\bigg\}\IR\pr{\n{\gradu}^p+\n{u}^p}\dx\geq\eta>0,
\end{split}
\end{equation}
provided $0<\nm{u}_{\WpRnm}<\min\Big\{\nm{\ul}_{\WpRnm},\big(\frac{r}{\lambda p}C^{-\frac{r}{p}}_k\big)^{\frac{1}{r-p}}\Big\}$.
Observe that the mountain pass structure by \eqref{eq2.21} is on $\WpR$ rather than on $\E$.
Thus, applying the mountain pass theorem of Candela and Palmieri \cite[Theorem 2.5]{CP} (see \cite[Theorem A.3]{AP1} as well for a different proof), there exists a sequence of functions $\bre{\tilde{u}_l:l\geq1}$ in $\E$ that satisfy $\Jc(\tilde{u}_l)\to c>0$ and $\Jc'(\tilde{u}_l)\to0$ in the dual space of $\E$ when $l\to\infty$, where we set $c=\inf\limits_{h\in\mathscr{H}}\max\limits_{z\in[0,1]}\Jc(h(z))>0$ for $\mathscr{H}=\bre{h\in C\big([0,1];\E\big):h(0)=0,\,h(1)=\ul}$.
As a result, we find a subsequence $\bre{\tilde{u}_l:l\geq1}$, using the same notation, such that $\tilde{u}_l\rightharpoonup\xi$ in $\E$ and $\tilde{u}_l\to\xi$ in $\LrkR$ for some function $\xi\in\E$ with $\tilde{u}_l\to\xi$ \textsl{a.e.} on $\RN$.
The same discussion as done ahead of \eqref{eq2.20} then leads to $\tilde{u}_l\to\xi$ in $\E$.
In particular, we have $\Jc'(\xi)v=0$ for all $v\in\E$; upon using $v=\xi^-\in\E$ as a test function, one realizes $\xi^-\equiv0$ and $\xi\geq0$.
As a consequence, $\ult:=\xi\geq0$ is another nontrivial positive solution to equation \eqref{eq1.4} in $\E$ that is different from $\ul$, since $\Jc(\ult)=c>0$.
\end{proof}


\bibliographystyle{amsplain}

\end{document}